\documentclass[leqno,11pt]{scrartcl}
\usepackage[utf8]{inputenc}

\usepackage{amsmath,amssymb,amsthm}
\usepackage{amscd}
\usepackage{setspace} 
\usepackage{enumerate}
\usepackage{array}
\usepackage{tabularx}
\usepackage{multirow}
\usepackage{booktabs}

\pagestyle{plain}
\theoremstyle{definition}
\newtheorem{definition}{Definition}

\newtheorem*{remarkson}{Remarks}
\theoremstyle{plain}
\newtheorem{theorem}{Theorem}

\newtheorem{lemma}[definition]{Lemma}
\newtheorem{corollary}{Corollary}

\theoremstyle{remark}

\newcommand{\C}{\mathbb{C}}
\newcommand{\F}{\mathbb{F}}

\renewcommand{\H}{\mathbb{H}}
\newcommand{\K}{\mathbb{K}}
\newcommand{\N}{\mathbb{N}}

\newcommand{\Q}{\mathbb{Q}}
\newcommand{\R}{\mathbb{R}}
\newcommand{\Z}{\mathbb{Z}}

\newcommand{\Dcal}{\mathcal{D}}

\newcommand{\Hcal}{\mathcal{H}}
\newcommand{\Ical}{\mathcal{I}}

\newcommand{\Ocal}{\mathcal{O}}

\newcommand{\trace}{\operatorname{trace}\,}
\newcommand{\diag}{\operatorname{diag}\,}
\newcommand{\her}{\operatorname{Her}}
\renewcommand{\Re}{\operatorname{Re}\,}
\newcommand{\sym}{\operatorname{Sym}}

\newcommand{\oh}{{\scriptstyle{{\cal O}}}}

\begin{document}

\begin{center}
\begin{huge}
\begin{spacing}{1.0}
\textbf{Congruence Subgroups and Orthogonal Groups}  
\end{spacing}
\end{huge}

\bigskip
by
\bigskip

\begin{large}
\textbf{Adrian Hauffe-Waschbüsch\footnote{Adrian Hauffe-Waschbüsch, Lehrstuhl A für Mathematik, RWTH Aachen University, D-52056 Aachen, adrian.hauffe@matha.rwth-aachen.de}} and
\textbf{Aloys Krieg\footnote{Aloys Krieg, Lehrstuhl A für Mathematik, RWTH Aachen University, D-52056 Aachen, krieg@rwth-aachen.de}}
\end{large}
\vspace{0.5cm}\\
January 2021
\vspace{1cm}
\end{center}
\begin{abstract}
We derive explicit isomorphisms between certain congruence subgroups of the Siegel modular group, the Hermitian modular group over an arbitrary imaginary-quadratic number field and the modular group over the Hurwitz quaternions of degree $2$ and the discriminant kernels of special orthogonal groups $SO_0(2,n)$, $n= 3,4,6$. The proof is based on an application of linear algebra adapted to the number theoretical needs.
\end{abstract}
\noindent\textbf{Keywords:} Siegel modular group, Hermitian modular group, modular group over Hurwitz quaternions, congruence subgroup, orthogonal group,  discriminant kernel  \\[1ex]
\noindent\textbf{Classification: 11F46, 11F55}
\vspace{2ex}\\

\newpage
\section{Introduction}

In the classical theory modular forms such as theta series are described in the setting of $Sp_n(\R)$ (cf. \cite{F}). About 20 years ago Borcherds (\cite{Bo}) established a product expansion for modular forms on $SO(2,n)$.

There are well-known isomorphisms between the projective symplectic group $PSp_2(\R)$, the projective special split unitary group $PSU(2,2;\C)$ and the projective quaternionic symplectic group $PSp_2(\H)$ on the one hand and the connected component of the identity of the projective special orthogonal group $PSO(2,n;\R)$, $n\in \{3,4,6\}$, on the other hand. 
In the first two cases an explicit form based on linear algebra can be found in \cite{GKver} and \cite{KRaW}. If one is interested in modular forms, it is necessary to adapt this isomorphism to number theoretical needs in order to identify modular forms with respect to discriminant kernels (cf. \cite{WW1}, \cite{WW2}, \cite{Will}) with Siegel and Hermitian modular forms with respect to congruence subgroups. This approach can be extended to the quaternions, where the lack of a classical notion of determinants leads to particular difficulties.

Given a non-degenerate symmetric even matrix $T\in\Z^{m\times m}$ let 
\[
 SO(T;\R):= \{U\in SL_m(\R);\;U^{tr} TU = T\}
\]
denote the attached special orthogonal group. Let $SO_0(T;\R)$ stand for the connected component of the identity matrix $I$ and $SO_0(T;\Z)$ for the subgroup of integral matrices. The \emph{discriminant kernel}
\[
 \Dcal(T;\Z):=\{U\in SO_0(T;\Z); \; U\in I+\Z^{m\times m}T\}
\]
is a normal subgroup of $SO_0(T;\Z)$. Given $N\in \N$ we moreover define
\[
 U(N) \oplus T: = \begin{pmatrix}
                   0 & 0 & N \\ 0 & T & 0 \\ N & 0 & 0
                  \end{pmatrix}
\]
for the orthogonal sum with the rescaled hyperbolic plane.

\section{Siegel modular group}

Denote by 
\[
\Gamma_2(\Z):= \{M\in \Z^{4\times 4};\;M^{tr} JM=J\},\;\; J= J^{(4)} = \begin{pmatrix}
                                                              0 & I \\ -I & 0
                                                             \end{pmatrix},
                                                             \;\; I= I^{(2)} = \begin{pmatrix}
                                                              1 & 0 \\ 0 & 1
                                                             \end{pmatrix},
\]
the \emph{Siegel modular group} of degree $2$. Throughout the paper we will always choose a block decomposition of $M$ into $2\times 2$ blocks
\begin{gather*}\tag{1}\label{gl_1}
 M= \begin{pmatrix}
     A & B \\ C & D
    \end{pmatrix},\;
 A= (a_{ij}),\; B=(b_{ij}), \; C= (c_{ij}), D=(d_{ij}).
\end{gather*}
Now let 
\begin{gather*}\tag{2}\label{gl_2}
 S_0 = U(1)\oplus(-2) = \begin{pmatrix}
                         0 & 0 & 1 \\ 0 & -2 & 0 \\ 1 & 0 & 0
                        \end{pmatrix},\;\;
 S_1 = U(1)\oplus S_0 = \begin{pmatrix}
                         0 & 0 & 1 \\ 0 & S_0 & 0 \\ 1 & 0 & 0
                        \end{pmatrix}.
\end{gather*}
For $2\times 2$ matrices the adjoint is defined by
\begin{gather*}\tag{3}\label{gl_3n}
 \begin{pmatrix}
  \alpha & \beta \\ \gamma & \delta
 \end{pmatrix}^{\sharp} = \begin{pmatrix}
			  \delta & -\beta \\ -\gamma & \alpha
			  \end{pmatrix}.
\end{gather*}			  
Denote the symmetric $2\times 2$ matrices by $\sym_2(\R)$ and define
\begin{gather*}\tag{4}\label{gl_4n}
\varphi:\sym_2(\R) \to \R^3, \;\; \begin{pmatrix}
                                   \alpha & \beta \\ \beta & \gamma
                                  \end{pmatrix}
 \mapsto \begin{pmatrix}
          \alpha \\ \beta \\ \gamma
         \end{pmatrix}.
\end{gather*}
We fix a notation for $\widetilde{M}$
\begin{gather*}\tag{5}\label{gl_5n}
 \widetilde{M} = \begin{pmatrix}
                         \alpha & a^{tr}S_0 & \beta \\ b & K & c \\ \gamma & d^{tr} S_0 & \delta
                        \end{pmatrix},\;
 \alpha, \beta, \gamma, \delta \in \R.
\end{gather*}
Given $M\in Sp_2(\R)$ of the form \eqref{gl_1} observe that
\begin{align*}
M\langle Z\rangle & := (AZ+B) (CZ+D)^{-1} = \tfrac{1}{\det(CZ+D)} \bigl(\det Z\cdot AC^\sharp +AZD^\sharp + BZ^\sharp C^\sharp + BD^\sharp\bigr), \\
\det M\{Z\} & := \det (CZ+D) = \det Z \cdot\det C + \trace (Z^\sharp C^\sharp D) + \det D,
\end{align*}
and define $\widetilde{M}\in \R^{5\times 5}$ in \eqref{gl_5n} by
\begin{gather*}\tag{6}\label{gl_6}
 \alpha = \det A, \; \beta = -\det B, \; \gamma = - \det C, \; \delta = \det D,
\end{gather*}
\begin{gather*}\tag{7}\label{gl_7}
 a = -\varphi(A^{\sharp}B),\; b = -\varphi(AC^{\sharp}), \; c = \varphi(BD^{\sharp}), \; d = \varphi(C^{\sharp} D).
\end{gather*}
\begin{gather*}\tag{8}\label{gl_8}
\begin{split}
 & \text{$K$ is the matrix, which represents the endomorphism}\\
 & f_M:\sym_2(\R) \to \sym_2(\R),\;\; Z\mapsto AZD^{\sharp}+BZ^{\sharp}C^{\sharp},  \\
 & \text{with respect to the basis $\left(\begin{smallmatrix}
                                           1 & 0 \\ 0 & 0
                                          \end{smallmatrix}\right), \;
 \left(\begin{smallmatrix}
  0 & 1 \\ 1 & 0
 \end{smallmatrix}\right), \;
 \left(\begin{smallmatrix}
  0 & 0 \\ 0 & 1
 \end{smallmatrix}\right).$}
 \end{split}
\end{gather*}
Our first result is
\begin{theorem}\label{theorem_1} 
 a) $\Gamma_2(\Z)/\{\pm I\}$ is isomorphic to $\Dcal(S_1;\Z)$ via the map $\pm M\mapsto \widetilde{M}$ given by \eqref{gl_6}, \eqref{gl_7}, \eqref{gl_8}. \\
 b) Given $N\in\N$ the discriminant kernel $\Dcal(NS_1;\Z)$ is isomorphic to the principal congruence subgroup of level $N$
 \[
  \{M\in\Gamma_2(\Z);\; M\equiv \varepsilon I \bmod{N}, \,\varepsilon \in\Z,\,\varepsilon^2\equiv 1\bmod{N}\}/\{\pm I\}
 \]
under the map in a).  \\
c) Given $n,N\in\N$, $n\mid N$ the map in a) yields an isomorphism between the congruence subgroup
\begin{gather*}\tag{9}\label{gl_9}
\Biggl\{ M\in\Gamma_2(\Z), \;\,
\begin{split}
& \; a_{11} \equiv a_{22}\equiv d_{11}\equiv d_{22} \equiv \varepsilon \bmod{n},\, \varepsilon\in\Z,\,\varepsilon^2\equiv 1\bmod{n}, \\
& \det A\equiv \det D\equiv 1\bmod{N},\, a_{21}\equiv b_{22}\equiv d_{12} \equiv 0\bmod{n}, \\
& \; c_{11}\equiv 0\bmod{Nn},\,c_{12}\equiv c_{21} \equiv c_{22}\equiv 0\bmod{N}
\end{split}\;\Biggr\} \Big/ \{\pm I\}
\end{gather*}
and
\begin{gather*}\tag{10}\label{gl_10}
 F\;\Dcal\bigl(U(N)\oplus U(n) \oplus (-2);\Z\bigr)\; F^{-1}\subseteq \Dcal(S_1;\Z), \, F=\diag(1,1,1,n,N).
\end{gather*}
\end{theorem}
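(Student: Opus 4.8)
The plan is to use part~a) and collapse the whole statement to a single transparent divisibility condition. The point of departure is the identity
\[
 U(N)\oplus U(n)\oplus(-2)=F\,S_1\,F,\qquad F=\diag(1,1,1,n,N),
\]
checked by direct computation: conjugating $S_1$ by $F$ rescales its outer hyperbolic plane $U(1)$ to $U(N)$, its inner $U(1)$ to $U(n)$, and leaves the entry $-2$ fixed. Write $T':=U(N)\oplus U(n)\oplus(-2)$. Then $U\mapsto FUF^{-1}$ is an isomorphism $SO_0(T';\R)\to SO_0(S_1;\R)$, and the containment in \eqref{gl_10} is immediate: for $U\in\Dcal(T';\Z)$ write $U=I+YT'$ with $Y\in\Z^{5\times5}$; since $T'=FS_1F$ we get $FUF^{-1}=I+(FYF)S_1$ with $FYF\in\Z^{5\times5}$, so $FUF^{-1}$ is integral and lies in $I+\Z^{5\times5}S_1$, hence in $\Dcal(S_1;\Z)$.

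By part~a) the map $\pm M\mapsto\widetilde M$ is already an isomorphism $\Gamma_2(\Z)/\{\pm I\}\to\Dcal(S_1;\Z)$, so it suffices to decide, for $M\in\Gamma_2(\Z)$, when $G:=F^{-1}\widetilde M F$ lies in $\Dcal(T';\Z)$. As $\widetilde M\in SO_0(S_1;\R)$ we have $G\in SO_0(T';\R)$ automatically, so only the integrality of $G$ and the kernel condition $G-I\in\Z^{5\times5}T'$ remain. Put $W:=(\widetilde M-I)S_1^{-1}$, which is integral by part~a). Using $T'^{-1}=F^{-1}S_1^{-1}F^{-1}$ one finds $(G-I)T'^{-1}=F^{-1}WF^{-1}$, with $(i,j)$ entry $W_{ij}/(f_if_j)$ where $(f_1,\dots,f_5)=(1,1,1,n,N)$; and whenever this is integral, $G=I+\bigl((G-I)T'^{-1}\bigr)T'$ is integral as well. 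Hence
\[
 G\in\Dcal(T';\Z)\quad\Longleftrightarrow\quad f_if_j\mid W_{ij}\ \text{ for all }i,j.
\]

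Now I would compute $W$ explicitly. Since $S_1^{-1}$ has nonzero entries only at $(1,5),(5,1),(2,4),(4,2)$ (value $1$) and at $(3,3)$ (value $-\tfrac12$), the columns of $W$ are the columns of $\widetilde M-I$ in reversed order, the middle one scaled by $-\tfrac12$. Reading off $f_if_j\mid W_{ij}$ and inserting \eqref{gl_6}, \eqref{gl_7}, \eqref{gl_8} (with $a=(a_1,a_2,a_3)^{tr}$, and similarly for $b,c,d$) yields the conditions $N\mid\det A-1,\ \det D-1$; $N^2\mid\det C$; $N\mid b_1,b_2$, $Nn\mid b_3$; $N\mid d_1,d_2$, $Nn\mid d_3$; $n\mid a_3,c_3$; $n\mid K_{11}-1,\,K_{33}-1,\,K_{21}$; $n^2\mid K_{31}$; and $n\mid K_{32}/2$ (the last being an integer by part~a)). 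The implication \eqref{gl_9}$\Rightarrow$(these conditions) is then a direct substitution, with one delicate point: the entry $-2$ forces $n\mid K_{32}/2$ rather than merely $n\mid K_{32}$. Here I would use the symplectic relation $AD^{tr}-BC^{tr}=I$, whose $(2,1)$ entry reads $a_{21}d_{11}+a_{22}d_{12}=b_{21}c_{11}+b_{22}c_{12}$ and turns the defining expression for $K_{32}$ into $K_{32}=2(b_{22}c_{12}-a_{22}d_{12})$; the congruences $b_{22}\equiv d_{12}\equiv0\bmod n$ from \eqref{gl_9} then give $n\mid K_{32}/2$ at once.

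The converse, recovering \eqref{gl_9} from the divisibility conditions, is the main obstacle, since all conditions on $b,c,d,K$ are bilinear in the blocks of $M$ and must be unwound in the right order. I would proceed as follows. The conditions $N\mid b_1,b_2,b_3$ say $AC^\sharp\equiv0\bmod N$; as $\det A\equiv1\bmod N$ makes $A$ invertible modulo $N$, this forces $C^\sharp\equiv0$, i.e. $C\equiv0\bmod N$. Reducing $K_{11}-1$ and $K_{33}-1$ modulo $n$ then gives $a_{11}d_{22}\equiv a_{22}d_{11}\equiv1\bmod n$, so the four diagonal entries of $A,D$ are units mod $n$; feeding this successively into $K_{21}$, $c_3$ and $K_{32}/2$ peels off $d_{12}\equiv0$, $b_{22}\equiv0$ and $a_{21}\equiv0\bmod n$. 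The relation $A^{tr}D-C^{tr}B=I$ reduced modulo $n$ (using $C\equiv0$) together with $\det A\equiv\det D\equiv1\bmod N$ collapses the diagonal congruences to a single $\varepsilon$ with $\varepsilon^2\equiv1\bmod n$. Finally $Nn\mid d_3$, combined with $c_{21}\equiv0\bmod N$, $d_{12}\equiv0\bmod n$ and $d_{22}$ a unit mod $n$, upgrades $c_{11}\equiv0\bmod N$ to $c_{11}\equiv0\bmod Nn$, which completes \eqref{gl_9}. The care needed lies precisely in this ordering and in the repeated use of invertibility of $A,D$ modulo $N$ and $n$.
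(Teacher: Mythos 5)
Your treatment of part c) is correct and is organized differently from the paper's. You reduce everything to the explicit divisibility criterion $f_if_j\mid W_{ij}$ for $W=(\widetilde M-I)S_1^{-1}$, based on the identity $U(N)\oplus U(n)\oplus(-2)=FS_1F$, and then unwind the resulting list of congruences directly; the list you extract (conditions on $\det A-1$, $\det D-1$, $\det C$, the entries of $a,b,c,d$ and of $K$) is the right one, and both implications go through in the order you describe. The paper instead argues more structurally: it reads off $C\equiv 0\bmod N$ from $\alpha\equiv\delta\equiv 1$, $b\equiv d\equiv 0\bmod N$, then conjugates by $\widetilde M_0$ (the image of $I\times J$) to transfer the same argument to the inner hyperbolic plane and obtain $a_{21}\equiv b_{22}\equiv d_{12}\equiv 0\bmod n$, and finally uses the determinant conditions together with $AD^{tr}\equiv I\bmod n$ and the last entry of $b$ to finish. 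Your route is longer but completely self-contained and checkable entry by entry; the paper's is shorter because the conjugation symmetry does half the bookkeeping. One small presentational point: to extract $a_{21}\equiv 0\bmod n$ you need the \emph{other} form $K_{32}/2=a_{21}d_{11}-b_{21}c_{11}$ furnished by the symplectic relation (or, equivalently, the relation itself once $d_{12}\equiv b_{22}\equiv c_{11}\equiv 0\bmod n$), not the form $b_{22}c_{12}-a_{22}d_{12}$ that you quote for the forward direction; as stated, "feeding into $K_{32}/2$" is ambiguous, though the fix is immediate.

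The genuine gap is that part b) is never proved. It is not a special case of c): setting $n=N$ in \eqref{gl_10} gives $U(N)\oplus U(N)\oplus(-2)$, whereas $NS_1$ has middle entry $-2N$, and the subgroup \eqref{gl_9} with $n=N$ leaves $a_{12}$, $d_{21}$ and all of $B$ unconstrained, while the principal congruence subgroup constrains every entry of $M$. Your machinery does apply — $\widetilde M\in\Dcal(NS_1;\Z)$ is equivalent to $N\mid W_{ij}$ for all $i,j$ (with the factor $2$ in the middle column forcing $2N\mid K_{12},\,K_{22}-1,\,K_{32}$) — but turning that into $M\equiv\varepsilon I\bmod N$ requires a further argument: one must deduce $B\equiv C\equiv 0\bmod N$ (here both $\det A\equiv 1$ and $\det D\equiv 1\bmod N$ are needed to invert $A$ and $D$), and then show that the conditions on $K$ together with $AD^{tr}\equiv I\bmod N$ force $A$ and $D$ to be \emph{scalar} matrices $\varepsilon I\bmod N$ with $\varepsilon^2\equiv 1\bmod N$ — precisely the step the paper handles by conjugating with $M_0$ and comparing determinantal conditions. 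Until that part is supplied, the proof of the theorem is incomplete.
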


\begin{proof}
 a) This is the case $N=1$ of Lemma 5 in \cite{GKver}. \\
 b) At first \eqref{gl_6}, \eqref{gl_7}, \eqref{gl_8} show that the image of the principal congruence subgroup is contained in the discriminant kernel. Given $\widetilde{M} \in \Dcal(NS_1;\Z)$ of the form \eqref{gl_5n} part a) yields $B\equiv C \equiv 0\bmod{N}$, because $\det A$ and $\det D$ are coprime to $N$ in \eqref{gl_7}. Now observe (cf. \cite{K3}, p. 44) that 
 \[
  \pm M_0 = \pm(I\times J) \mapsto \widetilde{M}_0 = \begin{pmatrix}
                                                      -J & 0 & 0 \\ 0 & 1 & 0 \\ 0 & 0 & J
                                                     \end{pmatrix}, \;\; J=J^{(2)}.
 \]
If we conjugate by $M_0$ resp. $\widetilde{M}_0$, the same argument shows that $M$ is congruent to a diagonal matrix $\bmod{N}$. The determinantal conditions on the blocks of $M$ and $M_0MM^{-1}_0$ as well as $AD^{tr} \equiv I\bmod{N}$ imply
\[
 M\equiv \varepsilon I\bmod{N}, \; \varepsilon\in\Z,\;\; \varepsilon^2\equiv 1\bmod{N}.
\]
c) The image of the congruence subgroup \eqref{gl_9} is contained in \eqref{gl_10} by \eqref{gl_6}, \eqref{gl_7}, \eqref{gl_8}. Given $\widetilde{M} \in F\;\Dcal \bigl(U(N)\oplus U(n) \oplus (-2);\Z\bigr) \;F^{-1}$ we have
\[
 \alpha \equiv \delta \equiv 1\bmod{N}, \;\; b,d\equiv 0\bmod{N}.
\]
We obtain $C\equiv 0\bmod{N}$. Now we apply the same procedure to $\widetilde{M}_0\widetilde{M}\widetilde{M}^{-1}_0$. This leads to 
\[
 a_{21}\equiv b_{22} \equiv d_{12} \equiv 0\bmod{n}.
\]
The determinantal conditions on the $A$- and $D$-blocks as well as $AD^{tr}\equiv I \bmod{n}$ imply that all the diagonal entries of $M$ are congruent to $\varepsilon\bmod{n}$ for some $\varepsilon\in\Z$ satisfying $\varepsilon^2\equiv 1\bmod{n}$. From these congruences and the fact that the last entry of $b=-\varphi(AC^{\sharp})$ is divisible by $Nn$, we get $c_{11}\equiv 0\bmod{Nn}$.
\end{proof}

\begin{remarkson}
 a) The projective symplectic group in the underlying isomorphism of real Lie groups entails the appearance of the projective principal congruence subgroup in Theorem \ref{theorem_1}, which differs from the common one in the non-projective setting (cf. \cite{Kl2}).  \\
 b) It follows from c) that $\Dcal\bigl(U(N)\oplus U(1) \oplus (-2);\Z\bigr)$ is isomorphic to the more familiar congruence subgroup
 \[
  \{M\in \Gamma_2(\Z);\; C\equiv 0\bmod{N},\, \det A\equiv \det D \equiv 1\bmod{N}\} \big/\{\pm I\}.
 \]
 c) In the language of lattices c) of Theorem \ref{theorem_1} refers to the orthogonal group with respect to $U(N) \oplus U(n) \oplus A_1(-1)$. 
 The case of an arbitrary orthogonal sum with two hyperbolic planes over $\Z$ can be reduced to Theorem \ref{theorem_1} c) by the elementary divisor theorem.
\end{remarkson}

\section{Hermitian modular group}

Let $\K= \Q(\sqrt{-m})$, $m\in\N$ squarefree, be an imaginary quadratic number field with discriminant $d_\K$ and ring of integers $\oh_\K =  \Z+\Z \omega_\K$, given by 
\begin{gather*}\tag{11}\label{gl_11}
 d_\K =
 \begin{cases}
  -m,& \\ -4m,&
 \end{cases}
 \;\; \omega_\K = 
 \begin{cases}
  (m+\sqrt{-m})/2, & \text{if}\; m\equiv 3\bmod{4}, \\ m+\sqrt{-m}, & \text{else}.
 \end{cases}
\end{gather*}
The \emph{Hermitian modular group} of degree $2$ with respect to $\K$ (cf. \cite{B2}) is defined by 
\[
 \Gamma_2(\oh_\K):=\bigl\{M\in SL_4(\oh_\K);\; \overline{M}^{tr} JM=J\bigr\}.
\]
In this case we define 
\begin{gather*}\tag{12}\label{gl_12}
 S_\K = \begin{pmatrix}
         2 & 2\Re(\omega_\K) \\ 2 \Re(\omega_\K) & 2 |\omega_\K|^2
        \end{pmatrix}, \;\; S_0 = 
        \begin{pmatrix}
         0 & 0 & 1 \\ 0 & -S & 0 \\ 1 & 0 & 0
        \end{pmatrix}, \;\; S_1 = 
        \begin{pmatrix}
         0 & 0 & 1 \\ 0 & S_0 & 0 \\ 1 & 0 & 0
        \end{pmatrix} \in \Z^{6\times 6}.
\end{gather*}
Note that
\begin{gather*}\tag{13}\label{gl_13}
 (\Z\times \Z) S_\K = 
 \begin{cases}
  \Z\times m\Z, & \text{if}\; d_\K \;\text{is odd}, \\
  2\Z\times 2m\Z, & \text{if}\; d_\K \;\text{is even}.
 \end{cases}
\end{gather*}
In this setting we consider the Hermitian $2\times 2$ matrices $\her_2(\C)$ instead of $\sym_2(\R)$ in section 2 and replace \eqref{gl_4n} by
\begin{gather*}\tag{4'}\label{gl_4strich_n}
 \varphi:\her_2(\C) \to \R^4,\;\;\begin{pmatrix}
                                 \alpha & \beta+\gamma\omega_\K \\ \beta +\gamma\overline{\omega}_\K & \delta
                                \end{pmatrix}
                                \mapsto (\alpha,\beta,\gamma,\delta)^{tr}.
\end{gather*}
We adopt the notions of section 2. Given $M\in SU(2,2;\C)$ with block decompositions as in \eqref{gl_2}, we consider $\widetilde{M}\in SO(S_1;\R)$ with $S_1$ from \eqref{gl_12} in the notion of \eqref{gl_5n} using \eqref{gl_6}, \eqref{gl_7} with $\varphi$ from \eqref{gl_4strich_n} as well as
\begin{gather*}\tag{8'}\label{gl_8strich}
 \begin{split}
  & K\; \text{is the matrix, which represents the endomorphism} \\
  & f_M:\her_2(\C)\to \her_2(\C), \;\; Z\mapsto AZD^{\sharp}+BZ^{\sharp}C^{\sharp},  \\
  & \text{with respect to the basis}\; 
  \left(\begin{smallmatrix}
         1 & 0 \\ 0 & 0
        \end{smallmatrix}\right), \;
  \left(\begin{smallmatrix}
         0 & 1 \\ 1 & 0
        \end{smallmatrix}\right),\;
  \left(\begin{smallmatrix}
         0 & \omega_\K \\ \overline{\omega}_\K & 0
        \end{smallmatrix}\right),\;
  \left(\begin{smallmatrix}
         0 & 0 \\ 0 & 1
        \end{smallmatrix}\right).
 \end{split}
\end{gather*}
If we consider congruences $\bmod{\,N}$, i.e. $\bmod{\,N\oh_\K}$, $N\in\N$, the result is
\begin{theorem}\label{theorem_2} 
 a) $\Gamma_2(\oh_\K)/\{\pm I\}$ is isomorphic to $\Dcal(S_1;\Z)$ the map $\pm M\mapsto \widetilde{M}$ given by \eqref{gl_4strich_n}, \eqref{gl_6}, \eqref{gl_7}, \eqref{gl_8strich}.  \\
 b) Given $N\in\N$ the discriminant kernel $\Dcal(NS_1;\Z)$ is isomorphic to the principal congruence subgroup of level $N\oh_\K$,
\[
 \{M\in\Gamma_2(\oh_\K);\;M\equiv \varepsilon I\bmod{N},\,\varepsilon\in\Z,\,\varepsilon^2\equiv 1\bmod{N}\}/\{\pm I\}
\]
under the map in a). \\
c) Given $n,N\in\N$, $n\mid N$ the map in a) yields an isomorphism between the congruence subgroup
\begin{gather*}\tag{14}\label{gl_14}
\Biggl\{ M\in\Gamma_2(\oh_\K); \;\,
\begin{split}
& \; a_{11} \equiv d_{11}\equiv \overline{a}_{22} \equiv \overline{d}_{22} \equiv \varepsilon \bmod{n},\, \varepsilon\in\oh_\K,\,\varepsilon \overline{\varepsilon}\equiv 1\bmod{n}, \\
& \det A\equiv \det D\equiv 1\bmod{N},\, a_{21}\equiv b_{22}\equiv d_{12} \equiv 0\bmod{n}, \\
& \; c_{11}\equiv 0\bmod{Nn},\,c_{12}\equiv c_{21} \equiv c_{22}\equiv 0\bmod{N}
\end{split}\;\Biggr\} \Big/\{\pm I\}
\end{gather*}
and
\[
F\;\Dcal\bigl(U(N)\oplus U(n) \oplus(-S_\K);\Z\bigr)\; F^{-1}\subseteq \Dcal(S_1;\Z),\;\; F= \diag(1,1,1,1,n,N). 
\]
\end{theorem}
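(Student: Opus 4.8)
The plan is to read Theorem~\ref{theorem_2} as the Hermitian mirror of Theorem~\ref{theorem_1} and to transplant the latter's proof into the ring $\oh_\K$, watching where complex conjugation intervenes. For part~a) I would invoke the base isomorphism $\Gamma_2(\oh_\K)/\{\pm I\}\cong\Dcal(S_1;\Z)$ from \cite{KRaW} (its case $N=1$) and check that it is realised by \eqref{gl_4strich_n}, \eqref{gl_6}, \eqref{gl_7}, \eqref{gl_8strich}; everything afterwards is bookkeeping of congruences through these formulas. Throughout I would use that $\overline{M}^{tr}JM=J$ yields, besides $\overline{A}^{tr}C=\overline{C}^{tr}A$ and $\overline{D}^{tr}B=\overline{B}^{tr}D$, the relation $A\overline{D}^{tr}-B\overline{C}^{tr}=I$, which is the Hermitian substitute for $AD^{tr}\equiv I$ from the proof of Theorem~\ref{theorem_1}.

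For part~b) one inclusion follows by inserting $M\equiv\varepsilon I\bmod N$ into \eqref{gl_6}, \eqref{gl_7}, \eqref{gl_8strich}. Conversely, given $\widetilde{M}\in\Dcal(NS_1;\Z)$, the formulas \eqref{gl_7} together with $\det A\equiv\det D\equiv 1\bmod N$ (read off the corners of $\widetilde{M}$) give $B\equiv C\equiv 0\bmod N$; conjugating by the integral element $M_0=I\times J$, whose image $\widetilde{M}_0$ is the same as in the Siegel case, and repeating the argument shows $M$ is diagonal $\bmod N$. The determinantal conditions, the relation $A\overline{D}^{tr}\equiv I\bmod N$, and the $\omega_\K$-entries of the middle block $K$ (which are scaled by $N$ because $NS_1$ rescales $-S_\K$ as well) then pin the common diagonal value down to a rational integer, giving $M\equiv\varepsilon I\bmod N$ with $\varepsilon\in\Z$, $\varepsilon^2\equiv 1\bmod N$.

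The heart of the matter is part~c), which I would split into the two inclusions as in Theorem~\ref{theorem_1}~c). The forward inclusion is a direct substitution of the congruences of \eqref{gl_14} into \eqref{gl_6}, \eqref{gl_7}, \eqref{gl_8strich}, noting that $F=\diag(1,1,1,1,n,N)$ rescales precisely the two coordinates carrying the hyperbolic planes, so that the $Nn$-divisibility of $c_{11}$ matches that of the last $\varphi$-coordinate of $AC^\sharp$. For the reverse inclusion I would first read off, from the outer $U(N)$-plane of $V:=F^{-1}\widetilde{M}F\in\Dcal(U(N)\oplus U(n)\oplus(-S_\K);\Z)$, that $\det A\equiv\det D\equiv 1\bmod N$ and $b=-\varphi(AC^\sharp)\equiv 0\bmod N$; as $A$ is then invertible $\bmod N$, this gives $C^\sharp\equiv 0$, hence $C\equiv 0\bmod N$. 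Conjugating by $\widetilde{M}_0$ and rerunning the step on the inner $U(n)$-plane produces $a_{21}\equiv b_{22}\equiv d_{12}\equiv 0\bmod n$. Since $C\equiv 0\bmod N$ kills the $B$-terms in \eqref{gl_8strich}, the diagonal of $K$ reduces to $a_{11}d_{22}$ and $a_{22}d_{11}$, and membership in the kernel forces $a_{11}d_{22}\equiv a_{22}d_{11}\equiv 1\bmod n$. Combining these with $a_{11}a_{22}\equiv d_{11}d_{22}\equiv 1\bmod n$ and the unitary relations $a_{11}\overline{d}_{11}\equiv a_{22}\overline{d}_{22}\equiv 1\bmod n$ from $A\overline{D}^{tr}\equiv I$, and setting $\varepsilon:=a_{11}$, I obtain $a_{11}\equiv d_{11}\equiv\varepsilon$ and $a_{22}\equiv d_{22}\equiv\overline{\varepsilon}$ together with $\varepsilon\overline{\varepsilon}\equiv 1\bmod n$, the conjugate and the norm condition being forced precisely by $\varepsilon^{-1}\equiv\overline{\varepsilon}$. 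Finally, writing $c_{11}=Nc_{11}'$ (legitimate since $C\equiv 0\bmod N$), the $Nn$-divisibility of $a_{22}c_{11}-a_{21}c_{12}$ together with $a_{21}\equiv 0\bmod n$, $c_{12}\equiv 0\bmod N$ and $a_{22}\equiv\overline{\varepsilon}$ a unit $\bmod n$ forces $c_{11}'\equiv 0\bmod n$, i.e. $c_{11}\equiv 0\bmod Nn$.

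The main obstacle I anticipate is not a single step but the uniform control of complex conjugation, which is what separates this from the Siegel case. The appearance of $\overline{a}_{22},\overline{d}_{22}$ and of the norm condition $\varepsilon\overline{\varepsilon}\equiv 1$ in place of $\varepsilon^2\equiv 1$ is forced by the single extra congruence $a_{11}d_{22}\equiv 1\bmod n$ from the $K$-block interacting with $\det A\equiv 1$ and $a_{22}\overline{d}_{22}\equiv 1$, and one must check that this is the only way the relations can close. The second delicate point is that each congruence $\bmod N\oh_\K$ on the entries of $M$ has to be matched with the correct divisibility of the real $\varphi$-coordinates, and this translation depends on the parity of $d_\K$; the argument must be run uniformly in both cases of \eqref{gl_11}, using \eqref{gl_13} to guarantee that the congruences read off the $-S_\K$-block and the passage from $\varphi(AC^\sharp)\equiv 0$ to $AC^\sharp\equiv 0\bmod N\oh_\K$ are valid in either case.
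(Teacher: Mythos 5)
Your proposal is correct and takes essentially the same route as the paper: part a) is quoted from \cite{KRaW}, and parts b) and c) transplant the proof of Theorem \ref{theorem_1} using conjugation by $M_0$, the relation $A\overline{D}^{tr}\equiv I$, the diagonal of the $K$-block, and the lattice identity \eqref{gl_13} to force $\varepsilon\equiv\overline{\varepsilon}$ and the norm condition. If anything you supply more detail than the paper, whose proof of c) consists largely of the instruction to proceed as in Theorem \ref{theorem_1}\,c).
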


\begin{proof}
 a) This is Theorem 3 in \cite{KRaW}.  \\
 c) Now let $N>1$. Proceed in the same way as in the proof of Theorem \ref{theorem_1}\,c) and observe that $A\overline{D}^{tr}\equiv I\bmod{n}$, where in this situation 
 \[
  \pm M_0 = \pm(I\times J)\mapsto \widetilde{M}_0 = \begin{pmatrix}
                                                     -J & 0 & 0  \\  0 & I & 0  \\  0 & 0 & J
                                                    \end{pmatrix}, \;\;
  I = I^{(2)}, \;\; J=J^{(2)}.
 \]
 b) Proceed as before. Note that any $M$ in the preimage satisfies
 \[
  M\equiv \diag(\varepsilon,\overline{\varepsilon},\varepsilon,\overline{\varepsilon})\bmod{N},\; \varepsilon\in\oh_\K, \;\varepsilon,\overline{\varepsilon}\equiv 1\bmod{N}.
 \]
As $\widetilde{M}$ belongs to the discriminant kernel, we conclude
\[
 f_M \left(\begin{pmatrix}
            0 & 1 \\ 1 & 0
           \end{pmatrix}\right) \equiv \begin{pmatrix}
					\ast & \varepsilon^2 \\ \overline{\varepsilon}^2 & \ast
					\end{pmatrix}
					\bmod{N}\;\text{resp.}\;\bmod{2N}
\]
from \eqref{gl_13}, hence
\[
 \varepsilon^2 \equiv 1, \; \varepsilon \equiv \overline{\varepsilon} 
 \begin{cases}
  \bmod{\,N}, & \text{if $d_\K$ is odd}, \\
  \bmod{\,2N}, & \text{if $d_\K$ is even}.
 \end{cases}
\]
Thus $\varepsilon$ may be chosen in $\Z$.
\end{proof}

In order to obtain congruence subgroups, which contain $Sp_2(\Z)$ properly, we need an ideal $\Ical$ in $\oh_\K$ satisfying $\Ical=\overline{\Ical}$. Thus we consider a squarefree divisor $N\mid d_\K$ and the integral ideal $\Ical_N$ of reduced norm $N$. As $N\mid |\omega_\K|^2$ in \eqref{gl_11}, we get
\begin{gather*}\tag{15}\label{gl_15}
\Ical_N = \Z N+\Z\omega_\K,\;\; \Ical^2_N = N\oh_\K. 
\end{gather*}
In this case we define
\begin{gather*}\tag{16}\label{gl_16}
 T = \begin{pmatrix}
      2N & 2\Re(\omega_\K) \\ 2\Re(\omega_\K) & 2|\omega_\K|^2/N
     \end{pmatrix}.
\end{gather*}
We observe that
\begin{gather*}\tag{17}\label{gl_17}
 (\Z\times \Z) T\begin{pmatrix}
                 1 & 0 \\ 0 & N
                \end{pmatrix}= (N\Z\times \Z)S_\K.
\end{gather*}

\begin{theorem}\label{theorem_3} 
Given a squarefree divisor $N$ of $d_\K$ the map $\pm M\mapsto \widetilde{M}$ in Theorem \ref{theorem_2} yields an isomorphism between the principal congruence subgroup $\bmod{\;\Ical_N}$
\begin{gather*}\tag{18}\label{gl_18}
 \{M\in\Gamma_2(\oh_\K);\; M\equiv \varepsilon I\bmod{\,\Ical_N},\, \varepsilon\in\Z,\,\varepsilon^2\equiv 1\bmod{N}\}/\{\pm I\}
\end{gather*}
and
\begin{gather*}\tag{19}\label{gl_19}
 G^{-1}_N \Dcal\bigl(U(N)\oplus U(N) \oplus (-T);\Z\bigr)G_N,\; G_N= \diag(1,1,1,N,1,1).
\end{gather*}
\end{theorem}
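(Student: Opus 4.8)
The plan is to deduce Theorem~\ref{theorem_3} from Theorem~\ref{theorem_2} by an explicit rescaling, whose linear-algebra core is the identity
\[
 G_N^{tr}\bigl(U(N)\oplus U(N)\oplus(-T)\bigr)G_N = N\,S_1 .
\]
I would check this directly from \eqref{gl_12}, \eqref{gl_16} and $G_N=\diag(1,1,1,N,1,1)$: the factor $N$ in position~$4$ conjugates the central block $-T$ into $-NS_\K$ and leaves the two hyperbolic planes $U(N)=N\,U(1)$ unchanged, so the whole Gram matrix is multiplied by $N$. Since rescaling a quadratic form does not alter its real orthogonal group, the conjugation $U\mapsto G_N^{-1}UG_N$ maps $SO\bigl(U(N)\oplus U(N)\oplus(-T);\R\bigr)$ isomorphically onto $SO(NS_1;\R)=SO(S_1;\R)$. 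In particular $G_N\widetilde M G_N^{-1}\in SO\bigl(U(N)\oplus U(N)\oplus(-T);\R\bigr)$ for every $M\in\Gamma_2(\oh_\K)$, and since the map of Theorem~\ref{theorem_2}\,a) is already an isomorphism onto $\Dcal(S_1;\Z)$, it suffices to prove, for each such $M$, the equivalence of $M$ lying in the congruence subgroup \eqref{gl_18} and $G_N\widetilde M G_N^{-1}$ lying in $\Dcal\bigl(U(N)\oplus U(N)\oplus(-T);\Z\bigr)$.

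For the forward implication I would take $M\equiv\varepsilon I\bmod\Ical_N$ and insert the resulting congruences for the blocks $A,B,C,D$ into the formulas \eqref{gl_6}, \eqref{gl_7}, \eqref{gl_8strich} for $\widetilde M$. Computing $G_N\widetilde M G_N^{-1}$, two points need verification: that the conjugate is still integral — the only entries that acquire a denominator $N$ sit in the column rescaled by $G_N$, and the needed divisibility follows from $A,D\equiv\varepsilon I$, $B,C\equiv 0\bmod\Ical_N$ together with $\Ical_N^2=N\oh_\K$ from \eqref{gl_15} — and that it satisfies the discriminant-kernel congruence for $U(N)\oplus U(N)\oplus(-T)$. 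Here the lattice identity \eqref{gl_17} is the bookkeeping tool: it matches the sublattice $(N\Z\times\Z)S_\K$, through which $\varphi$ of \eqref{gl_4strich_n} and the endomorphism $f_M$ of \eqref{gl_8strich} read off congruences modulo $\Ical_N$, with the level that the rescaled form $T$ imposes through $\diag(1,N)$.

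The backward implication is the crux. Given $G_N\widetilde M G_N^{-1}$ in the discriminant kernel I would run the argument of Theorem~\ref{theorem_2}\,c) almost verbatim: the determinantal data $\alpha\equiv\delta\equiv 1$ and $b,d\equiv 0$ yield $C\equiv 0$; conjugating by $\widetilde M_0$, the image of $M_0=I\times J$, produces the analogous congruences for the remaining blocks; and $\det A\equiv\det D\equiv 1$ together with $A\overline D^{tr}\equiv I$ force the diagonal entries of $M$ into a single class $\varepsilon$. I expect the main obstacle to be showing that the discriminant kernel of $U(N)\oplus U(N)\oplus(-T)$ encodes congruences modulo the ideal $\Ical_N$ and not modulo $N\oh_\K$; this is exactly where the ramification $N\mid d_\K$ enters. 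Because $\Ical_N^2=N\oh_\K$, the rescaling $G_N$ trades precisely one factor $\Ical_N$ between the hyperbolic and the definite part of the lattice, and one has to read off from \eqref{gl_17} that the weaker congruence modulo $\Ical_N$ — and no more — is what condition \eqref{gl_19} imposes. Finally, evaluating $f_M$ on $\left(\begin{smallmatrix}0&1\\1&0\end{smallmatrix}\right)$ and invoking \eqref{gl_13} as in Theorem~\ref{theorem_2}\,b) gives $\varepsilon^2\equiv 1\bmod N$ and $\varepsilon\equiv\overline\varepsilon$, so that $\varepsilon$ may be chosen in $\Z$.
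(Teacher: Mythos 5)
Your reduction establishes only that the image of \eqref{gl_18} equals $\eqref{gl_19}\cap\Dcal(S_1;\Z)$, not that it equals \eqref{gl_19}. The step ``it suffices to prove, for each such $M$, the equivalence\ldots'' silently assumes that every element of \eqref{gl_19} is of the form $\widetilde{M}$ for some $M\in\Gamma_2(\oh_\K)$, i.e.\ that \eqref{gl_19} is contained in $\Dcal(S_1;\Z)$. That containment is the actual crux of the theorem, and it is not supplied by the identity $G_N^{tr}\bigl(U(N)\oplus U(N)\oplus(-T)\bigr)G_N=NS_1$: conjugation by $G_N$ only places \eqref{gl_19} inside $SO(S_1;\R)$ (integrality of the row rescaled by $N^{-1}$ does hold, but must be checked), and $SO_0(S_1;\Z)$ is strictly larger than $\Dcal(S_1;\Z)$ in general. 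By Theorem 3 of \cite{KRaW}, the preimage of an arbitrary element of \eqref{gl_19} has the form $W_\ell L=L^\ast W_\ell$ with $\ell\mid d_\K$ squarefree and $L,L^\ast\in\Gamma_2(\oh_\K)$, where $W_\ell\in(1/\sqrt{\ell})\,\Ical_\ell^{4\times 4}$ is an Atkin--Lehner type matrix; a priori nothing forces $\ell=1$, and if $\ell>1$ the element would lie outside the image of $\Gamma_2(\oh_\K)$.

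The paper's proof devotes most of its length precisely to excluding this: it fixes an explicit $W_\ell$, shows by the standard diagonalization procedure that $L$ and $L^\ast$ are congruent to diagonal matrices $\bmod\;\Ical_N$, and then reads off from the entries $\widetilde{w}_{33},\widetilde{w}_{34},\widetilde{w}_{43},\widetilde{w}_{44}$ of $\widetilde{W}_\ell$ a chain of congruences giving $\ell\mid m$, then $\ell\mid N'$ with $N'=\gcd(N,m)$, then $N'\ell\mid m$, whence $\ell=1$ since $m$ is squarefree. Without an argument of this kind your proof does not establish surjectivity onto \eqref{gl_19}. The two inclusions you do prove --- the forward one via \eqref{gl_15} and \eqref{gl_17}, and the backward one for elements already known to lie in $\Dcal(S_1;\Z)$ via the argument of Theorem~\ref{theorem_2} --- coincide with the first part of the paper's proof and are fine.
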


\begin{proof}
 Note that $H\in \Ical^{2\times 2}_N$, $H=\overline{H}^{tr}$ satisfies
 \[
  \varphi(H) \equiv (0,0,\ast,0)^{tr}\bmod{N}
 \]
because of \eqref{gl_15}. Using \eqref{gl_4strich_n} - \eqref{gl_8strich} a verification shows that the images of the matrices in \eqref{gl_18} are contained in \eqref{gl_19}. Any representative of $\oh_\K/\Ical_N$ may be chosen in $\Z$. Hence we conclude in the same way as in the proof of Theorem \ref{theorem_2} that any $M$ in the preimage of 
\[
 G^{-1}_N \;\Dcal\bigl(U(N) \oplus U(N) \oplus(-T);\Z\bigr)\; G_N \cap \Dcal(S_1;\Z)
\]
satisfies
\[
 M\equiv \varepsilon I\bmod{\Ical_N},\; \varepsilon\in\Z,\;\varepsilon^2\equiv 1\bmod{N}.
\]
Now start with an arbitrary $\widetilde{M}=(\widetilde{m}_{ij})\in G^{-1}_N \;\Dcal\bigl(U(N) \oplus U(N)\oplus (-T);\Z\bigr)\;G_N$. It follows from Theorem 3 in \cite{KRaW} that the preimage of $\widetilde{M}$ has the form
\[
 W_{\ell} L = L^{\ast} W_{\ell},\;\ell\mid d_\K\; \text{squarefree}, \;L,L^{\ast}\in \Gamma_2(\oh_\K),
\]
where $W_{\ell}$ is an arbitrary matrix in $(1/\sqrt{\ell})\, \Ical^{4\times 4}_{\ell}$ satisfying $\overline{W}^{tr}_{\ell} JW_{\ell} = J$ and $\det W_{\ell} = 1$. We choose
\begin{align*}
 W_{\ell} & = \begin{pmatrix}
               \overline{V}^{tr}_{\ell} & 0 \\ 0 & V^{-1}_{\ell}
              \end{pmatrix}, \;\; V_{\ell} = 
              \begin{pmatrix}
               \mu u/\sqrt{\ell} & \nu N\sqrt{\ell} \\ N\sqrt{\ell} & \overline{u}/\sqrt{\ell}
              \end{pmatrix},    \\[1ex]
 u & = \ell +m+\sqrt{-m}, \; \mu,\nu\in \Z,\;\; \mu u \overline{u}/\ell -\nu N^2 \ell = 1.
\end{align*}
Now let $m\equiv 1 \bmod 4$ and $N'= \gcd (N,m)$. As
\[
 \widetilde{W}_{\ell} = \begin{pmatrix}
                         1 & 0 & 0 \\ 0 & K_{\ell} & 0 \\ 0 & 0 & 1
                        \end{pmatrix}, \;\; K_{\ell} \equiv
                        \begin{pmatrix}
                         \mu & \ast & 0 \\ 0 & \ast & 0 \\ 0 & \ast & u\overline{u}/\ell
                        \end{pmatrix} \bmod{N},
\]
the standard procedure shows that $L^{\ast}$ is congruent to a diagonal matrix $\bmod{\;\Ical_N}$. The special choice of $W_{\ell}$ shows that this is also true for $L$, i.e.
\[
 L\equiv \diag (\varepsilon,\delta,\delta,\varepsilon)\bmod{\Ical_N}, \; \varepsilon,\delta\in\Z,\;\; \varepsilon \delta\equiv 1\bmod{N}.
\]
This leads to
\begin{align*}
 \widetilde{L} & \in \diag(1, \varepsilon^2,1,1,\delta^2,1)+\Lambda, \\
 \Lambda & = G^{-1}_N \Z^{6\times 6} \diag(N,N,2N',2mN/N',N,N).
\end{align*}
Thus we obtain for $\widetilde{W}_{\ell} = (\widetilde{w}_{ij})$
\begin{align*}
 \widetilde{w}_{43} & = -2\mu-2\mu m/\ell \equiv 0\bmod{2N'/N},  \\
 \widetilde{w}_{34} & \equiv 2\mu m(m+1) (m+\ell)/\ell \equiv 0 \bmod{2mN/N'},
\end{align*}
which implies $\ell\mid m$ by considering odd and even $N$ separately. Then
\[
 \widetilde{w}_{44} = 1-2\mu m(m+1)/\ell \equiv 1\bmod{2m/N'}
\]
leads to $\ell\mid N'$. Finally
\[
 \widetilde{w}_{33} \equiv 1+2 \mu m (m-1)/\ell \equiv 1 \bmod{2N'}
\]
gives $N'\ell\mid m$ and $\ell=1$, as $m$ is squarefree. Thus we get $L\in\Gamma_2(\oh_\K)$ and the claim follows from the considerations above.  \\
The (simpler) cases $m\equiv 2,3\bmod{4}$ are dealt with in the same way.
\end{proof}

\begin{remarkson}
 a) If $d_\K$ is even and $N\mid m$, the choice of another basis shows that 
 \[
 \Dcal\left( U(N) \oplus U(N) \oplus 
 \begin{pmatrix}
         -2N & 0 \\ 0 & -2m/N
        \end{pmatrix};\Z\right)
 \]       
 is isomorphic to the principal congruence subgroup $\bmod{\;\Ical_\N}$. \\
 b) If we conjugate by $\diag(\sqrt{-m},1,-1/\sqrt{-m},1)$ we see that the principal congruence subgroup $\bmod{\,\Ical_m}$ is isomorphic to the discriminant kernel $\Dcal\bigl(U(m) \oplus U(m) \oplus (-S_\K);\Z\bigr)$ via Theorem \ref{theorem_2}.
\end{remarkson}

\section{Quaternionic modular group}

In this section we consider the \emph{Hamiltonian quaternions}
\[
 \H = \R + \R i + \R j + \R k, \;\; k = ij = -ji, \;\; i^2 = j^2 = -1.
\]
If $b_1,$ $b_2,$ $b_3,$ $b_4$ is a basis of $\H$ over $\R$, we define 
\[
 S = (b_\nu \overline{b}_\mu + b_\mu \overline{b}_\nu)_{\nu,\mu}, \;\; S_0 = U(1)\oplus (-S),\;\; S_1 = U(1)\oplus S_0 \in\R^{8 \times 8}.
\]
We replace $\varphi$ in \eqref{gl_4n} by
\begin{gather*}\tag{4*}\label{gl_4stern_n}
 \varphi: \her_2(\H) \to \R^6,
\end{gather*}
which describes the coordinates with respect to the basis
\[
 \begin{pmatrix}
  1 & 0 \\ 0 & 0
 \end{pmatrix},\;\;
  \begin{pmatrix}
  0 & b_1 \\ \overline{b}_1 & 0
 \end{pmatrix},\;\;
  \begin{pmatrix}
  0 & b_2 \\ \overline{b}_2 & 0
 \end{pmatrix},\;\;
   \begin{pmatrix}
  0 & b_3 \\ \overline{b}_3 & 0
 \end{pmatrix},\;\;
  \begin{pmatrix}
  0 & b_4 \\ \overline{b}_4 & 0
 \end{pmatrix},\;\;
 \begin{pmatrix}
  0 & 0 \\ 0 & 1
 \end{pmatrix}. 
\]
If we extend $\varphi$ by $\C$-linearity, we obtain a bijection between the quaternionic half-space $H(2;\H)$ (cf. \cite{K3}, p. 46) and the orthogonal half-space $\Hcal_S$ (cf. \cite{G4}, \cite{K1}). We compare the action of 
\[
 Sp_2(\H):= \{M\in \H^{4\times 4};\; \overline{M}^{tr} JM = J\}
\]
on $H(2;\H)$ via $Z\mapsto M\langle Z\rangle = (AZ+B)(CZ+D)^{-1}$ with the action of $SO_0(S_1;\R)$ on $\Hcal_S$ (cf. \cite{G4})
\begin{align*}
 z\mapsto \widetilde{M}\langle z\rangle & : = \bigl(-\tfrac{1}{2} z^{tr} S_0 z\cdot b + Kz + c\bigr)\cdot (\widetilde{M}\{z\})^{-1},  \\
		     \widetilde{M}\{z\} & : = -\tfrac{1}{2} z^{tr} S_0 z\cdot \gamma + d^{tr} S_0 z + \delta,
\end{align*}
in the notation of $\widetilde{M}\in\R^{8\times 8}$ in \eqref{gl_5n} adapted to $S_0$ above.

\begin{lemma}\label{lemma_1} 
 $\varphi$ induces an isomorphism of the groups
\begin{gather*}\tag{20}\label{gl_20}
 Sp_2(\H)\big/\{\pm I\} \to SO_0(S_1;\R)\big/\{\pm I\}, \;\; \pm M\mapsto \pm \widetilde{M},
\end{gather*}
via 
\[
 \varphi(M\langle Z\rangle) = \widetilde{M}\langle \varphi(Z)\rangle \;\;\text{for all}\;\; Z\in H(2;\H).
\]
\end{lemma}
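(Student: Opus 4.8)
The plan is to make the intertwining identity $\varphi(M\langle Z\rangle)=\widetilde M\langle\varphi(Z)\rangle$, $Z\in H(2;\H)$, the backbone of the whole argument: once it holds for every $M$, well-definedness of $M\mapsto\widetilde M$, the homomorphism property, injectivity and surjectivity all follow from soft facts about the displayed action of $SO_0(S_1;\R)$ on $\Hcal_S$ together with the classical real Lie group isomorphism $PSp_2(\H)\cong PSO_0(2,6)$ recalled in the introduction. I would therefore concentrate the effort on this one identity, following the pattern of Lemma 5 in \cite{GKver} and Theorem 3 in \cite{KRaW}, while accounting for the features special to $\H$.

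The first thing to note is that the determinant-based decomposition of $M\langle Z\rangle$ and of the automorphy factor from \S2--\S3 has no literal analogue over $\H$: the adjoint \eqref{gl_3n} is no longer multiplicative, so already $(CZ)^{\sharp}\neq Z^{\sharp}C^{\sharp}$; the product $(CZ+D)(CZ+D)^{\sharp}$ need not be a scalar; and even the Hermitian symmetry of $AZD^{\sharp}+BZ^{\sharp}C^{\sharp}$, needed for $f_M$ to map $\her_2(\H)$ into itself, is not automatic. The genuine automorphy factor is the reduced (Study) determinant of $CZ+D$ furnished by the theory of the quaternionic half-space in \cite{K3} and \cite{G4}, which is what $\det(CZ+D)$ must be replaced by, and $\alpha=\det A$, $\delta=\det D$ in \eqref{gl_6} have likewise to be read as reduced norms.

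To establish the identity in spite of this I would reduce to the Bruhat generators of $Sp_2(\H)$ — the translations $\left(\begin{smallmatrix} I & B\\ 0 & I\end{smallmatrix}\right)$ with $B\in\her_2(\H)$, the Levi elements $\left(\begin{smallmatrix} U & 0\\ 0 & (\overline U^{tr})^{-1}\end{smallmatrix}\right)$ with $U\in GL_2(\H)$, and $J$ — and verify it on each. For the translations ($Z\mapsto Z+B$) and for $J$ ($Z\mapsto -Z^{-1}$) a short direct computation from \eqref{gl_6}, \eqref{gl_7} and the definition of $K$ gives the required $\widetilde M$; for the Levi elements the action $Z\mapsto UZ\overline U^{tr}$ on $\her_2(\H)$ is the quaternionic spin representation of $GL_2(\H)$ on the reduced-norm form, and $K$ is its matrix in the coordinates \eqref{gl_4stern_n}, where one uses that the $\C$-linear $\varphi$ sends the reduced norm of $Z$ to $-\tfrac12\varphi(Z)^{tr}S_0\varphi(Z)$ (immediate from $S=(b_\nu\overline b_\mu+b_\mu\overline b_\nu)_{\nu,\mu}$). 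I would then propagate the identity to all of $Sp_2(\H)$ by checking $\widetilde{Mg}=\pm\widetilde M\widetilde g$ for a single generator $g$ at a time. Finally, injectivity (the kernel being $\{\pm I\}$), the homomorphism property modulo $\pm I$ (both $\widetilde{M_1M_2}$ and $\widetilde M_1\widetilde M_2$ inducing the same, effective-up-to-sign, transformation of $\Hcal_S$) and surjectivity (from $PSp_2(\H)\cong PSO_0(2,6)$, \cite{G4}, \cite{K1}, with $Sp_2(\H)$ connected) follow formally.

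The hard part will be the Levi block, where the lack of a multiplicative determinant bites hardest: one must reconcile the matrix $K$ coming from the block formulas with the spin matrix of $Z\mapsto UZ\overline U^{tr}$ and with the reduced-determinant automorphy factor, even though $DD^{\sharp}$ for $D=(\overline U^{tr})^{-1}$ is in general not scalar. Keeping the order of the quaternionic factors correct as they pass through the $\C$-linear map $\varphi$ is where errors would arise; the single-generator bootstrap is precisely the device that confines this noncommutative bookkeeping to the three elementary cases and circumvents the determinant identities of \S2--\S3, which are exactly what break down over $\H$.
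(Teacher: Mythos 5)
Your proposal is correct and follows essentially the same route as the paper: the authors' entire proof consists of verifying the intertwining identity on standard generators of both groups (citing Lemma II.1.4 of \cite{K3} and \cite{K1}), which is exactly the generator-by-generator check you lay out, with your discussion of the reduced norm and the Levi block simply filling in the details the paper leaves to the references.
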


\begin{proof}
The assertion is verified for standard generators on both sides (cf. \cite{K3} Lemma II.1.4, \cite{K1}).
\end{proof}

Let ``$\,^\vee\,$'' stand for the standard embedding $\H\hookrightarrow \C^{2\times 2}$, i.e.
\[
 a = a_1 + a_2 i + a_3 j + a_4 k \mapsto \check{a} = \begin{pmatrix}
                                                  a_1 + a_2 i & a_3 + a_4 i \\ -a_3 + a_4 i & a_1-a_2 i
                                                 \end{pmatrix},
\]
and its extension to matrices (cf. \cite{K3} chap. I, \S2). Then a verification for the standard generators yields
\begin{gather*}\tag{21}\label{gl_21}
 \bigl(\widetilde{M}\{\varphi(Z)\}\bigr)^2 = \det(CZ+D)^\vee.
\end{gather*}
Note that the notation of $X^{\sharp}$ in \eqref{gl_3n} over the quaternions only makes sense for $X= \overline{X}^{tr}$. In this case we define
\[
 \det X = x_{11}x_{22}-x_{12}\overline{x}_{12} \;\;\text{with}\;\; (\det X)^2 = \det X^\vee.
\]
Now a verification leads to 
\begin{gather*}\tag{6*}\label{gl_6stern}
 \alpha = \pm\sqrt{\det A^\vee},\;\; \beta = \pm\sqrt{\det B^\vee},\;\; \gamma = \pm\sqrt{\det C^\vee}, \;\;\delta = \pm\sqrt{\det D^\vee},
\end{gather*}
\begin{gather*}\tag{7*}\label{gl_7stern}
 a = \pm\varphi(\alpha A^{-1} B),\;\; b = \pm\varphi(\gamma AC^{-1}),\;\; c = \pm\varphi(\delta BD^{-1}), \;\; d = \pm\varphi(\gamma C^{-1} D),
\end{gather*}
if the matrices involved have got rank $2$. 
Note that for instance also
\[
 b=\pm\varphi\bigl((\alpha CA^{-1})^\sharp\bigr),\;\; d = \pm \varphi \bigl((\delta D^{-1}C)^\sharp\bigr)
\]
hold. In the sequel $A$ and $D$ will always have rank $2$ in order to avoid technical difficulties. 
$K$ is again the representative matrix of an endomorphism $f_M$ with respect to the basis above. If $\det D^\vee\neq 0$ we have 
\begin{gather*}\tag{8*}\label{gl_8stern}
 f_M(Z) = \delta AZD^{-1} + BZ^{\sharp}(D^{-1} C)^{\sharp} \delta D^{-1}.
\end{gather*}
Note that $\delta \neq 0$ determines all the signs in \eqref{gl_6stern}, \eqref{gl_7stern} and \eqref{gl_8stern} uniquely in view of \eqref{gl_21}.

Now we consider the \emph{Hurwitz quaternions}
\[
 \Ocal = \Z+\Z i+\Z j+\Z\omega, \;\; \omega = \tfrac{1}{2}(1+i+j+k), \;\; S=
 \begin{pmatrix}
  2 & 0 & 0 & 1  \\ 0 & 2 & 0 & 1  \\  0 & 0 & 2 & 1  \\  1 & 1 & 1 & 2
 \end{pmatrix}.
\]
If $0\neq X\in \Ocal^{2\times 2}$ let
\[
 \rho(X):=\max\{\ell \in\N;\;\tfrac{1}{\ell} X\in\Ocal^{2\times 2}\}.
\]
Moreover we consider the prime ideal $\wp$ of even quaternions
\[
 \wp = \{a\in\Ocal;\;a\overline{a}\in 2\Z\} = \Z 2 + \Z(1+i) + \Z(1+j) + \Z(1+k).
\]
Note that 
\[
 \Ocal/\wp = \{\wp,1+\wp,\omega+\wp,\overline{\omega}+\wp\}\cong \F_4.
\]

\begin{lemma}\label{lemma_2}   
 If $X$ is any block among $A,B,C,D$ of $M\in Sp_2(\Ocal)$ with $\det X^\vee\neq 0$, then there exist $U,V\in GL_2(\Ocal)$ as well as $m,n\in\N$ such that
 \[
  UXV = \begin{pmatrix}
         m & 0 \\ 0 & mn
        \end{pmatrix} \;\;\text{or}\;\;\begin{pmatrix}
				      m(1+i) & 0 \\ 0 & mn(1+i)
				      \end{pmatrix},
 \]
where $n$ is odd in the latter case. Moreover one has 
\begin{enumerate}
 \item[a)] $\sqrt{\det X^\vee}\in\N$.
 \item[b)] $\sqrt{\det X^\vee} X^{-1} \in\Ocal^{2\times 2}$.
 \item[c)] $m=\rho(X) = \rho\bigl(\sqrt{\det X^\vee} X^{-1}\bigr)$.
\end{enumerate}
\end{lemma}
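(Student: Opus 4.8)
The plan is to first put $X$ into a diagonal form by the (two--sided) elementary divisor theorem over $\Ocal$, and then to use the fact that $X$ is a block of a symplectic matrix to force the two diagonal entries into the asserted shape; parts a), b), c) will then drop out of the normal form. Since $\Ocal$ is norm--Euclidean, it is a left and right principal ideal domain, so there are $U,V\in GL_2(\Ocal)$ with $UXV=\diag(d_1,d_2)$ and $d_1$ a two--sided divisor of $d_2$. The decisive point is that each $d_i$ generates a two--sided ideal of $\Ocal$. Granting this, I am done: the non--zero two--sided ideals of the maximal order $\Ocal$ are precisely the $c\,\Ocal=\Ocal c$ and $c(1+i)\Ocal=\Ocal c(1+i)$ with $c\in\N$ (because $\wp^2=2\Ocal$ and every odd rational prime stays unramified), so up to units $d_1,d_2$ are rational integers or rational integers times $1+i$; the divisibility $d_1\mid d_2$ then yields $\diag(m,mn)$ or $\diag(m(1+i),mn(1+i))$, and the fact that the factor $1+i$ can be split off only once (equivalently that $\wp$ occurs to the same power in $d_1$ and $d_2$) forces $n$ to be odd in the second case.

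To exploit the symplectic structure I first make the block diagonal without leaving $Sp_2(\Ocal)$: the conjugation $M\mapsto \diag(U,\overline{U}^{-tr})\,M\,\diag(V,\overline{V}^{-tr})$ lies in $Sp_2(\Ocal)$ and replaces the $A$--block by $UAV$ (and similarly for $B,C,D$ by the appropriate choices), so I may assume $X=\diag(d_1,d_2)$. I then bring in the relations encoded in $\overline{M}^{tr}JM=J$ and $MJ\overline{M}^{tr}=J$: the matrices $\overline{A}^{tr}C$ and $A\overline{B}^{tr}$ are Hermitian, and $\overline{D}^{tr}A-\overline{B}^{tr}C=I=A\overline{D}^{tr}-B\overline{C}^{tr}$. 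The last identities say that the column $\bigl(\begin{smallmatrix}A\\ C\end{smallmatrix}\bigr)$ and the row $(A\ B)$ are unimodular over $\Ocal$; it is precisely these unimodularity and Hermiticity conditions, absent for an arbitrary integral matrix, that will obstruct $d_i$ from generating a proper one--sided ideal. (That something is needed is clear: $\diag(1,1+2i)$ has $\det X^\vee=5$, which is not a square, so it is not a symplectic block.)

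The two--sided property is checked prime by prime. At an odd prime $p$ the order splits, $\Ocal\otimes\Z_p\cong M_2(\Z_p)$ and $GL_2(\Ocal\otimes\Z_p)\cong GL_4(\Z_p)$, the conjugation becomes $\check{x}\mapsto\operatorname{adj}\check{x}$, and the symplectic condition turns $\check{M}$ into an isometry of a unimodular symmetric form that is block--anti--diagonal with respect to the decomposition into $A,B,C,D$. Reading the relations above in this $8\times 8$ integral picture, I would show that the two $GL_2(\Z_p)$--Smith invariants of each $\check{d_i}$ must coincide, i.e. $\check{d_i}$ is $GL_2(\Z_p)$--equivalent to $p^{e_i}I_2$; equivalently $d_i$ generates a two--sided ideal at $p$. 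At $p=2$ the algebra is ramified: $\Ocal\otimes\Z_2$ is the maximal order of the division algebra, with uniformizer $1+i$, $\wp^2=2\Ocal$ and residue field $\Ocal/\wp\cong\F_4$. Here I would reduce the diagonalized relations modulo $\wp$ and modulo $2$ and compute the $\wp$--valuations of $d_1,d_2$, showing that they have equal parity (ruling out a mixed rational/$(1+i)$ case) and that, when both are odd, they are in fact equal; this is exactly the oddness of $n$. I expect this ramified $2$--adic computation, together with the odd--prime pairing, to be the main obstacle, since it is here that the full strength of the symplectic relations (not merely unimodularity) is required and where the exceptional factor $1+i$ enters.

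Once the normal form is established, a), b), c) are immediate. Because $\det U^\vee=\det V^\vee=1$ for $U,V\in GL_2(\Ocal)$ and $\det\bigl(\diag(d_1,d_2)\bigr)^\vee=N(d_1)N(d_2)$ with $N(c)=c^2$, $N\bigl(c(1+i)\bigr)=2c^2$, one gets $\sqrt{\det X^\vee}=m^2n$ resp. $2m^2n\in\N$, which is a). For b), writing $X=U^{-1}\diag(d_1,d_2)V^{-1}$ and using $\tfrac{1}{1+i}=\tfrac{1-i}{2}$ gives $\sqrt{\det X^\vee}\,X^{-1}=V\diag(mn,m)\,U$ resp. $V\diag\bigl(mn(1-i),m(1-i)\bigr)U$, visibly in $\Ocal^{2\times2}$. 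Finally $\rho$ is invariant under multiplication by elements of $GL_2(\Ocal)$, and $\rho\bigl(\diag(m,mn)\bigr)=\rho\bigl(\diag(m(1+i),mn(1+i))\bigr)=m$ as well as $\rho\bigl(\diag(mn,m)\bigr)=\rho\bigl(\diag(mn(1-i),m(1-i))\bigr)=m$, since $1\pm i$ is not divisible by any rational integer $>1$; this is c).
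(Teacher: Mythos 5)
Your overall strategy coincides with the paper's: put $X$ into elementary divisor form over $\Ocal$ and then use the symplectic relations to force the diagonal entries to generate two\hbox{-}sided ideals; your derivations of a), b), c) from the normal form are correct, and your classification of the two-sided ideals of $\Ocal$ is right. The problem is that the central step is not actually proved. You correctly identify it as "the decisive point" and even as "the main obstacle", but then only describe a program ("I would show that the two Smith invariants coincide", "I would reduce the relations modulo $\wp$ and compute the $\wp$-valuations") without carrying out any of the local computations at odd $p$ or at $p=2$. That this step needs a genuine argument is clear from examples such as $\diag\bigl(1+2i,(1+2i)^2\bigr)$, which has $\det X^\vee=625$ a perfect square and yet is not $GL_2(\Ocal)\times GL_2(\Ocal)$-equivalent to any of the asserted shapes; so neither unimodularity of $(A\;B)$ alone nor squareness of $\det X^\vee$ suffices, and the Hermiticity relations must enter quantitatively. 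As it stands, the heart of the lemma is asserted, not established.

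For comparison, the paper closes exactly this gap by a short global argument rather than a prime-by-prime one. Writing $A^*=UAV=\diag(a_1,a_4)$ with $a_\nu=u_\nu\alpha_\nu$, where $u_\nu\in\N$ or $u_\nu\in\N(1+i)$ and $\alpha_\nu$ is a primitive quaternion of odd norm, the Hermiticity of $A^*\overline{B^*}^{tr}$ (with $B^*=UB\overline{V}^{tr\,-1}$) forces the diagonal entries of $B^*$ to be left multiples of $\alpha_1,\alpha_4$ and pins down $b_3=a_4\overline{b}_2a_1^{-1}$; consequently \emph{every} $2\times2$ submatrix $P$ of $(A^*,B^*)$ has $\det P^\vee$ divisible by $\alpha_1\overline{\alpha}_1\cdot\alpha_4\overline{\alpha}_4$, and additionally by $2$ in the mixed case $u_1^{-1}u_4\notin\Z$ and in the case where both $u_\nu$ carry the factor $1+i$ and $u_1^{-1}u_4\in2\Z$. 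Since $(A^*,B^*)$ is the top $2\times4$ block of a matrix in $GL_4(\Ocal)$, these determinants are coprime, which kills the $\alpha_\nu$ and yields both the rationality of the elementary divisors and the oddness of $n$ in one stroke. If you want to salvage your local approach you would need to actually perform the $M_2(\Z_p)$ computation at odd $p$ and the ramified computation at $\wp$; alternatively, the coprimality-of-minors argument above is the efficient way to finish.
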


\begin{proof}
Let $X=A$ be of rank $2$ without restriction. Now choose $U,V\in GL_2(\Ocal)$ such that $A^\ast = UAV$ is in elementary divisor form (cf. \cite{K7}), i.e.
\begin{gather*}
  A^\ast = \begin{pmatrix}
            a_1 & 0 \\ 0 & a_4
           \end{pmatrix}, \;\;
 \begin{split}
  & a_\nu = u_\nu \alpha_\nu , \; u_\nu = n_\nu\;\text{or}\; n_\nu(1+i),\; n_\nu \in\N,   \\
  & \alpha_\nu \overline{\alpha}_\nu \;\text{odd}, \; \tfrac{1}{m_\nu} \alpha_\nu \notin \Ocal,\; m_\nu\in\N, \; m_\nu>1,\; a_1\overline{\alpha}_\nu\mid u_4.
 \end{split}
\end{gather*}
As $A^*\overline{B^*}^{tr}$, $B^* = UB\overline{V}^{tr-1} = \left(\begin{smallmatrix}
                                                                   b_1 & b_2 \\ b_3 & b_4
                                                                  \end{smallmatrix}\right)$,
is Hermitian, we obtain
\begin{align*}
 b_\nu & = m_\nu \alpha_\nu \;\,\text{resp.}\;\, b_\nu = m_\nu(1+i)\alpha_\nu,\; m_\nu\in \N, \;\,\text{if} \;\, \nu=1,4, \\
 b_3 & = a_4\overline{b}_2 a_1^{-1}, \; b_2\in\Ocal.
\end{align*}
If $P$ is any $2\times 2$ submatrix of $(A^*, B^*)$, we see that $\det P^\vee$ is divisible by $\alpha_1 \overline{\alpha}_1 \cdot \alpha_4\overline{\alpha}_4$ in any case as well as by $2$ if
\[
 u_1^{-1} u_4 \notin \Z \;\;\text{or}\;\; u_\nu = m_\nu(1+i),\;\nu=1,4 \;\;\text{and}\;\; u_1^{-1} u_4 \in 2\Z.
\]
As $(A^*,B^*)$ is the upper triangular $2\times 4$ block of a matrix in $GL_4(\Ocal)$, the determinants $\det P^\vee$ are coprime, whenever $P$ runs through all $2\times 2$ blocks of $(A^*,B^*)$. This leads to the result in view of
\[
 \sqrt{\det A^\vee} V^{-1} A^{-1} U^{-1} = 
 \begin{pmatrix}
  mn & 0  \\  0 & m
 \end{pmatrix} \;\;\text{or}\;\;\begin{pmatrix}
				mn(1-i) & 0  \\  0 & m(1-i)
				\end{pmatrix}.
\]
\vspace*{-8ex}\\
\end{proof}
\vspace{4ex}
Next we want to describe the discriminant kernel. Therefore let
\[
 \Gamma_2(\Ocal):= \{M\in Sp_2(\Ocal);\;\det (M \bmod{\wp}) \equiv 1\bmod{\wp}\}
\]
stand for the special modular group, which satisfies
\[
 Sp_2(\Ocal) = \Gamma_2(\Ocal) \cup (\omega I)\cdot \Gamma_2(\Ocal) \cup (\overline{\omega} I)\cdot \Gamma_2(\Ocal),
\]
as well as (cf. \cite{FH})
\[
 \Gamma^*_2 (\Ocal) = Sp_2(\Ocal) \cup \left(\frac{1+i}{\sqrt{2}} I\right) Sp_2(\Ocal).
\]

\begin{theorem}\label{theorem_4}  
 a) In \eqref{gl_20} the special modular group $\Gamma_2(\Ocal)\big/\{\pm I\}$ is isomorphic to the discriminant kernel $\Dcal(S_1;\Z)\big/\{\pm I\}$.  \\
 b) In \eqref{gl_20} the extended modular group $\Gamma^*_2(\Ocal)\big/\{\pm I\}$ is isomorphic to $SO_0(S_1;\Z)\big/\{\pm I\}$.
\end{theorem}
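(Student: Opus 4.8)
The plan is to transport the real Lie-group isomorphism of Lemma~\ref{lemma_1} to the arithmetic level by feeding the divisibility statements of Lemma~\ref{lemma_2} into the explicit entry formulas \eqref{gl_6stern}, \eqref{gl_7stern}, \eqref{gl_8stern}, and then to deduce b) from a) by an index count. For a) I would first show $\widetilde{M}\in\Dcal(S_1;\Z)$ for $M\in\Gamma_2(\Ocal)$. Since $A$ and $D$ have rank~$2$, Lemma~\ref{lemma_2}(a),(b) give $\alpha=\sqrt{\det A^\vee}$, $\delta=\sqrt{\det D^\vee}\in\N$ together with $\alpha A^{-1},\delta D^{-1}\in\Ocal^{2\times2}$, while $\beta,\gamma$ are integral by the same lemma or vanish for a singular block. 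Substituting these into \eqref{gl_7stern} and \eqref{gl_8stern} and using that every block of $M$ has Hurwitz entries, each of $a,b,c,d$ and each column of $K$ is the $\varphi$-image of a Hermitian matrix in $\Ocal^{2\times2}$ (the verification for $K$ requiring the most careful bookkeeping with the scalar $\delta$), so $\widetilde{M}\in\Z^{8\times8}$; that it lies in $SO_0(S_1;\R)$ is already Lemma~\ref{lemma_1}.

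It then remains to check the discriminant-kernel congruence $\widetilde{M}\equiv I\bmod S_1$. As the two hyperbolic planes are unimodular, this reduces to showing that $\widetilde M$ acts trivially on the discriminant group $S^{-1}\Z^4/\Z^4\cong(\Z/2\Z)^2\cong\Ocal/\wp$, and this is precisely where the defining condition $\det(M\bmod\wp)\equiv1\bmod\wp$ of $\Gamma_2(\Ocal)$ enters: reducing \eqref{gl_6stern}--\eqref{gl_8stern} modulo the elementary divisors of $S$, which involve only the prime $2$ since $\det S=4$, I would identify the action of $\widetilde M$ on the discriminant form with multiplication by $\det(M\bmod\wp)$ on $\F_4$. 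For the converse, starting from $\widetilde{M}\in\Dcal(S_1;\Z)$, Lemma~\ref{lemma_1} recovers $\pm M\in Sp_2(\H)$; reconstructing the blocks from the integral data of $\widetilde M$ and matching contents via Lemma~\ref{lemma_2}(c) forces $M\in\Ocal^{4\times4}$, and the congruence returns $\det(M\bmod\wp)\equiv1$. Injectivity of $\pm M\mapsto\pm\widetilde M$ is inherited from Lemma~\ref{lemma_1}.

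For b) I would first verify that $\Gamma_2^*(\Ocal)$ still maps into $SO_0(S_1;\Z)$: for $M\in Sp_2(\Ocal)$ integrality is as above, and for the extra generator $\tfrac{1+i}{\sqrt2}I$ a direct evaluation of \eqref{gl_6stern}--\eqref{gl_8stern} shows the image is integral, the scalar having reduced norm $1$. By a), the induced map $\Gamma_2^*(\Ocal)/\Gamma_2(\Ocal)\to SO_0(S_1;\Z)/\Dcal(S_1;\Z)$ is a well-defined injective homomorphism with kernel $\Gamma_2(\Ocal)$. The displayed coset decompositions give $[\Gamma_2^*(\Ocal):\Gamma_2(\Ocal)]=2\cdot3=6$, while $\Dcal(S_1;\Z)$ is by definition the kernel of the action of $SO_0(S_1;\Z)$ on the discriminant group, so $SO_0(S_1;\Z)/\Dcal(S_1;\Z)$ embeds into the orthogonal group of the discriminant form of $S$, which for the underlying $D_4$-lattice is $S_3$ of order $6$. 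The chain $6=|\Gamma_2^*/\Gamma_2|\le|SO_0/\Dcal|\le6$ then forces every map to be an isomorphism, and passing to the quotient by $\{\pm I\}$ yields b).

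The main obstacle I expect is the congruence step in a): showing that $\widetilde M\equiv I\bmod S_1$ is \emph{equivalent} to $\det(M\bmod\wp)\equiv1$, not merely implied by it. This demands reading \eqref{gl_6stern}--\eqref{gl_8stern} modulo $2$ precisely enough to match the $\F_4$-valued determinant with the action on the two-dimensional $\F_2$-discriminant form, and it is cleanest to carry out on the standard generators of $Sp_2(\H)$ used in Lemma~\ref{lemma_1} together with the scalar $\omega I$. Notably, the index squeeze in b) dissolves the question of whether every automorphism of the discriminant form lifts to $SO_0(S_1;\Z)$, so no separate surjectivity argument onto $S_3$ is required.
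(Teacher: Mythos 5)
Your route differs substantially from the paper's. The paper disposes of both parts in one stroke: it takes known generator systems for $\Gamma_2(\Ocal)$, $\Gamma_2^*(\Ocal)$ (from \cite{K3}, \cite{FH}) and for $\Dcal(S_1;\Z)$, $SO_0(S_1;\Z)$ (from \cite{FH}, \cite{K1}, using the finite group $SO(S;\Z)$ of order $576$ and the identity \eqref{gl_22}) and checks the two inclusions generator by generator. Your replacement for part b) --- the index squeeze $6=[\Gamma_2^*(\Ocal):\Gamma_2(\Ocal)]\le |SO_0(S_1;\Z)/\Dcal(S_1;\Z)|\le |O(q_{D_4})|=6$ --- is sound \emph{granted part a)}, and it is an attractive way to avoid knowing generators of $SO_0(S_1;\Z)$ explicitly; likewise the identification of the discriminant action with $\det(M\bmod\wp)\in\F_4^*\hookrightarrow S_3$ is the right conceptual reading of why $\Gamma_2(\Ocal)$ (rather than $Sp_2(\Ocal)$) matches the discriminant kernel, and verifying it on the standard generators of Lemma~\ref{lemma_1} is legitimate. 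The forward inclusion via Lemma~\ref{lemma_2}(a),(b) and \eqref{gl_6stern}--\eqref{gl_8stern} is also workable, though you should not brush aside the singular-block cases: the paper explicitly restricts \eqref{gl_7stern}, \eqref{gl_8stern} to blocks of rank $2$, and an element-wise argument has to handle $\det C^\vee=0$, $\det B^\vee=0$ separately (which is precisely why the paper retreats to generators).

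The genuine gap is the surjectivity half of a). You write that for $\widetilde M\in\Dcal(S_1;\Z)$ one ``reconstructs the blocks from the integral data of $\widetilde M$ and matches contents via Lemma~\ref{lemma_2}(c)'' to force $M\in\Ocal^{4\times4}$. But Lemma~\ref{lemma_2} is stated and proved only for blocks of a matrix that already lies in $Sp_2(\Ocal)$ --- its proof uses the elementary divisor theory of $\Ocal$ and the coprimality of the subdeterminants of $(A^*,B^*)$ as part of a matrix in $GL_4(\Ocal)$ --- so invoking it to establish $M\in\Ocal^{4\times4}$ is circular. Inverting \eqref{gl_6stern}--\eqref{gl_8stern} from the integrality of $\alpha,\beta,\gamma,\delta,a,b,c,d,K$ to integrality of $A,B,C,D$ (with the sign normalization and possible rank degeneracies) is the actual content of this direction, and your proposal does not supply it. This is exactly the point where the paper's reliance on explicit generators of $\Dcal(S_1;\Z)$ and $SO_0(S_1;\Z)$, together with \eqref{gl_22} and the order-$576$ group $SO(S;\Z)$, does real work; your outline needs either that input or a genuine reconstruction argument to be complete.
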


\begin{proof}
 Verify the inclusions for generators of the groups above (cf. \cite{K3} Theorem II.2.3, \cite{FH}, \cite{K1}). Note that $SO(S;\Z)$ is a group of order $576$, which is described in \cite{FH}, and use
 \begin{gather*}\tag{22}\label{gl_22}
  (\Z\times \Z\times \Z\times \Z)S = (2\Z\times 2\Z\times 2\Z\times \Z)\cup (1,1,1,0) + (2\Z\times 2\Z\times 2\Z\times \Z). 
 \end{gather*}
 \vspace*{-8ex}\\
\end{proof}

Now we are going to describe congruence subgroups of $Sp_2(\Ocal)$. Note that the congruence $X\equiv I\bmod{N}$ in Lemma \ref{lemma_2} does \textbf{not} imply $\sqrt{\det X^\vee}\equiv 1 \bmod{N}$ in general. As an example consider
\[
 N= 15, \;\; X=\diag(1+15(20i+76j+280k),1+15\omega),
\]
\[
 \sqrt{\det X^\vee} = 67,721\equiv 11\bmod{15}.
\]
Therefore we use a variant of the definition of the discriminant kernel.

\begin{corollary}\label{corollary_1} 
 Given $N\in\N$, then $\eqref{gl_20}$ yields an isomorphism between 
\[
 \{M\in \Gamma_2(\Ocal);\; M\equiv \varepsilon I\bmod{N},\,\varepsilon\in\Z,\,\varepsilon^2\equiv 1 \bmod{N}\}\big/\{\pm I\},
\]
if $N$ is odd, resp.
\[
 \{M\in \Gamma_2(\Ocal); M\equiv \varepsilon I\bmod{N},\varepsilon\in\Z,\varepsilon^2\equiv 1 \bmod{N}, \varepsilon(a_{11}+ a_{22}) \equiv \varepsilon^2 +1\bmod{N\wp} \}\big/\{\pm I\}, 
\]
if $N$ is even, and 
 \begin{gather*}\tag{23}\label{gl_23}
 \{\widetilde{M}\in SO_0(S_1;\Z);\; \widetilde{M}\in\rho I + \Z^{8\times 8} NS_1,\,\rho\in\Z\;\text{odd},\,\rho^2\equiv 1 \bmod{N}\}\big/\{\pm I\}.
 \end{gather*}
\end{corollary}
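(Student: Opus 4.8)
The plan is to follow the pattern of Theorems \ref{theorem_1}--\ref{theorem_3}, now built on Theorem \ref{theorem_4}\,a) and Lemma \ref{lemma_2}; since \eqref{gl_20} is already a group isomorphism, it suffices to show that the stated congruence subgroup is mapped onto \eqref{gl_23} and that the full preimage of \eqref{gl_23} lies in it. The genuinely new feature is the ambiguity of the square root $\sqrt{\det X^\vee}$ highlighted in the example preceding the statement, which is responsible both for the parameter $\rho$ and for the split into odd and even $N$.

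For the inclusion ``$\subseteq$'' I would start with $M\in\Gamma_2(\Ocal)$, $M\equiv\varepsilon I\bmod N$, so that $B\equiv C\equiv 0\bmod N$. Reading off \eqref{gl_6stern}--\eqref{gl_8stern}, the entries $\beta,\gamma$ are divisible by $N$, and so are $a,b,c,d$: here Lemma \ref{lemma_2}\,b) is used to make $\sqrt{\det B^\vee}\,B^{-1}$ and $\sqrt{\det C^\vee}\,C^{-1}$ integral, after which the factor $N$ coming from $B\equiv C\equiv 0$ survives the reduction. From \eqref{gl_8stern} the quaternionic term drops out modulo $N$ and $f_M(Z)\equiv\delta AZD^{-1}\equiv\rho\,Z\bmod N$, whence $K\equiv\rho I\bmod N$. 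Finally $\alpha=\sqrt{\det A^\vee}$ and $\delta=\sqrt{\det D^\vee}$ are integers by Lemma \ref{lemma_2}\,a) with $\alpha^2\equiv\delta^2\equiv\varepsilon^4\equiv 1\bmod N$; the orthogonality of the first and last columns of $\widetilde M\in SO_0(S_1;\Z)$ gives $\alpha\delta\equiv 1\bmod N$, hence $\delta\equiv\alpha\bmod N$, so that a single $\rho:=\alpha\bmod N$ governs all of $\widetilde M\bmod N$.

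The crux is to upgrade ``$\widetilde M\equiv\rho I\bmod N$ entrywise'' to the lattice membership $\widetilde M\in\rho I+\Z^{8\times 8}NS_1$ together with the oddness of $\rho$. Since $M\in\Gamma_2(\Ocal)$, Theorem \ref{theorem_4}\,a) already places $\widetilde M$ in $\Dcal(S_1;\Z)$, i.e.\ $\widetilde M-I\in\Z^{8\times 8}S_1$; the only denominators occurring in $S_1^{-1}$ are powers of $2$, governed by the even structure \eqref{gl_22}. For odd $N$ the prime $2$ is invertible modulo $N$, so the $2$-adic parity carried by $\Dcal(S_1;\Z)$ decouples from the level and both the membership and the oddness of $\rho$ follow automatically. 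For even $N$ this decoupling fails and one has to control the middle block by hand: the additional congruence $\varepsilon(a_{11}+a_{22})\equiv\varepsilon^2+1\bmod N\wp$ is precisely what forces the off-diagonal components of $\widetilde M-\rho I$ into the even sublattice recorded in \eqref{gl_22} (scaled by $N$) and pins $\rho$ to an odd residue. I expect this even-$N$ analysis to be the main obstacle.

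For the reverse inclusion I would run the argument backwards. Given $\widetilde M$ in \eqref{gl_23}, its membership in $\Dcal(S_1;\Z)$ and Theorem \ref{theorem_4}\,a) place the preimage in $\Gamma_2(\Ocal)$, and Lemma \ref{lemma_2} reconstructs the blocks $A,B,C,D$ up to the overall sign fixed by $\delta\neq 0$ via \eqref{gl_21}. Reducing $\widetilde M\equiv\rho I$ modulo the $NS_1$-lattice and inverting \eqref{gl_6stern}--\eqref{gl_8stern} then yields $B\equiv C\equiv 0$ and $A\equiv D\equiv\varepsilon I\bmod N$ for a suitable $\varepsilon\in\Z$ with $\varepsilon^2\equiv 1\bmod N$, the even case once more requiring the $\bmod N\wp$ bookkeeping to recover the trace condition on $a_{11}+a_{22}$. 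The point to keep in mind throughout is that, as the example $N=15$ shows, $\rho$ need not be $\pm 1$ and $\sqrt{\det A^\vee}$ genuinely depends on $M$ beyond $M\bmod N$, so one cannot shortcut the argument by replacing $\rho I$ with $\pm I$.
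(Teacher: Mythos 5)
Your overall strategy is the paper's: reduce to the pattern of Theorem \ref{theorem_2}, use \eqref{gl_6stern}--\eqref{gl_8stern} together with Lemma \ref{lemma_2}, exploit $\alpha\delta\equiv 1\bmod N$, and let the lattice structure \eqref{gl_22} account for the odd/even dichotomy and the oddness of $\rho$. Your observations about the square-root ambiguity, the decoupling of the prime $2$ for odd $N$, and the impossibility of replacing $\rho I$ by $\pm I$ are all correct and consonant with the paper.

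There is, however, one genuine gap: you assert in the reverse inclusion that the preimage satisfies $A\equiv D\equiv\varepsilon I\bmod N$ ``for a suitable $\varepsilon\in\Z$'' without any mechanism for why the diagonal entries, which are a priori arbitrary quaternions in $\Ocal$, are congruent mod $N$ to a single \emph{rational} integer. Over the noncommutative ring $\Ocal$ this is the heart of the matter and is not delivered by the determinant conditions or by $\alpha\delta\equiv 1$. The paper extracts it from the middle block: once $M$ is diagonal mod $N$, the condition $K\equiv\rho I$ modulo the lattice determined by \eqref{gl_22} says that $f_M(Z)\equiv \rho Z$ for $Z=\left(\begin{smallmatrix}0&u\\ \overline u&0\end{smallmatrix}\right)$ with $u$ running through the \emph{whole basis} $1,i,j,\omega$, i.e. $a_{11}u\overline a_{22}\equiv u\bmod N$ for all $u\in\Ocal$. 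Taking $u=1$ gives $a_{11}\overline a_{22}\equiv 1$, and then $a_{11}$ commutes mod $N$ with $1,i,j,\omega$, hence lies in $\Z+N\Ocal$. The single test element $\left(\begin{smallmatrix}0&1\\1&0\end{smallmatrix}\right)$ that sufficed in the Hermitian case does not suffice here, so this step is not a routine transcription of Theorem \ref{theorem_2} and cannot be left implicit. Relatedly, the even-$N$ refinement the paper actually uses is the system $a_{11}\overline a_{22}-1\equiv a_{11}i\overline a_{22}-i\equiv a_{11}j\overline a_{22}-j\bmod{2N}$, again obtained by testing $f_M$ against several basis elements via \eqref{gl_22}; your description of the even case as $2$-adic bookkeeping on $\widetilde M-\rho I$ points in the right direction but does not produce these congruences, which are what ``lead to'' the trace condition $\varepsilon(a_{11}+a_{22})\equiv\varepsilon^2+1\bmod{N\wp}$.
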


\begin{proof}
 Note that \eqref{gl_23} is contained in $\Dcal(S_1;\Z)/\{\pm I\}$. Proceed in the same way as in the proof of Theorem \ref{theorem_2}. Use \eqref{gl_6stern}, \eqref{gl_7stern}, \eqref{gl_8stern} as well as $\alpha \delta \equiv 1 \bmod{N}$, whenever $\widetilde{M}$ is congruent to a diagonal matrix $\bmod{\;N}$. 
 If $M$ is congruent to a diagonal matrix $\bmod{\;N}$, one observes the conditions
 \begin{align*}
  a_{11} u\overline{a}_{22} & \equiv u\bmod{N} \;\text{for all}\; u\in\Ocal \\
  \text{and for $N$ even } a_{11} \overline{a}_{22}-1 & \equiv a_{11} i\overline{a}_{22} - i \equiv a_{11} j\overline{a}_{22} - j \bmod{2N},
 \end{align*}
due to \eqref{gl_22}, which leads to the claim.
\end{proof}

The additional condition for even $N$ above in particular ensures that $\sqrt{\det A^{\vee}}\equiv \pm 1 \bmod{2^k}$, if $N=2^k N'$ with odd $N'$.

Finally we consider the congruence $\bmod{\wp}$.

\begin{corollary}\label{corollary_2} 
 The principal congruence subgroup
 \[
  \{M\in Sp_2(\Ocal);\; M\equiv \varepsilon I \bmod{\wp},\, \varepsilon=1,\omega,\overline{\omega}\}/\{\pm I\}
 \]
is isomorphic to 
\[
 \Dcal\bigl(U(2)\oplus U(2) \oplus (-S);\Z\bigr)/\{\pm I\}.
\]
\end{corollary}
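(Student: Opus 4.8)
The plan is to treat the statement as the ramified analogue of Theorem \ref{theorem_3} and Corollary \ref{corollary_1}. The prime in question is $\wp$, with $\wp^2 = 2\Ocal$ and residue field $\Ocal/\wp\cong\F_4$, and the factor $U(2)$ on each of the two hyperbolic planes should encode the reduced norm $2$ of $\wp$, precisely as $U(N)$ encoded the norm of $\Ical_N$ before. As a first, purely linear-algebraic step I would compare the two forms directly: writing $S_1 = U(1)\oplus U(1)\oplus(-S)$ with the hyperbolic planes in the coordinates $(1,8)$ and $(2,7)$ and the block $-S$ in the coordinates $3,\dots,6$, the diagonal matrix $F = \diag(1,1,1,1,1,1,2,2)$ satisfies $F^{tr}S_1F = U(2)\oplus U(2)\oplus(-S)$, so $\widetilde M\mapsto F^{-1}\widetilde M F$ maps $SO_0(S_1;\R)$ onto $SO_0\bigl(U(2)\oplus U(2)\oplus(-S);\R\bigr)$. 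Composing with \eqref{gl_20} gives the natural candidate for the isomorphism, and the task is to identify its image.

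The forward inclusion is the easier half. Starting from $M\in Sp_2(\Ocal)$ with $M\equiv\varepsilon I\bmod{\wp}$, so that $A\equiv D\equiv\varepsilon I$ and $B\equiv C\equiv 0\bmod{\wp}$, I would substitute into \eqref{gl_6stern}, \eqref{gl_7stern}, \eqref{gl_8stern}, using Lemma \ref{lemma_2} to control $\sqrt{\det A^\vee},\sqrt{\det D^\vee}\in\N$ and the corresponding inverses. Together with the lattice description \eqref{gl_22} this places the image of the principal congruence subgroup $\bmod{\wp}$, i.e.\ the case $\varepsilon = 1$, inside $\Dcal\bigl(U(2)\oplus U(2)\oplus(-S);\Z\bigr)$. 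For the converse I would run the standard procedure of Theorem \ref{theorem_2}\,c) and Corollary \ref{corollary_1}: conjugating by the image $\widetilde M_0$ of $M_0 = I\times J$ to produce the complementary block congruences, and then combining the determinantal relations of \eqref{gl_6stern} with $A\overline{D}^{tr}\equiv I$ to force $A\equiv D\equiv\varepsilon I$, $B\equiv C\equiv 0\bmod{\wp}$ for a single $\varepsilon\in\Ocal$ with $\varepsilon\overline\varepsilon\equiv 1\bmod{\wp}$, hence $\varepsilon\in\F^\times_4 = \{1,\omega,\overline\omega\}$.

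The step I expect to be the main obstacle is exactly what separates this corollary from Corollary \ref{corollary_1}, where the admissible residues were rational integers: here the scalars $\omega I$ and $\overline\omega I$ lie in $Sp_2(\Ocal)\setminus\Gamma_2(\Ocal)$, since $\det(\omega I\bmod{\wp}) = \omega\neq 1$, and they represent the two nontrivial cosets of $\Gamma_2(\Ocal)$ in $Sp_2(\Ocal)$. Under \eqref{gl_20} such a scalar is sent to an element that acts on the discriminant group of $S$, which is isomorphic to $\Ocal/\wp\cong\F_4$, by the order-three Frobenius automorphism, and this is the action recorded by the nontrivial coset $(1,1,1,0)+(2\Z\times 2\Z\times 2\Z\times\Z)$ in \eqref{gl_22}. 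Consequently the naive rescaling by $F$ does not by itself carry $\omega I$, $\overline\omega I$ into the discriminant kernel; the crux is to show that passing to level $\wp$ nevertheless yields a group containing the matching order-three torsion, which is produced by the odd off-diagonal entries of $S$ through \eqref{gl_22}, and that the three cosets $I,\omega I,\overline\omega I$ are distributed over it correctly.

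I would settle this last point in the generator-based style of Theorem \ref{theorem_4}: evaluate the correspondence explicitly on $\omega I$, on $\overline\omega I$ and on a generating set of the principal congruence subgroup $\bmod{\wp}$, using $\Ocal/\wp\cong\F_4$ and \eqref{gl_22} at each step, and finally check that no element of the extended group $\Gamma^\ast_2(\Ocal)$ lying outside $Sp_2(\Ocal)$ can occur in the preimage. Verifying that the order-three coset structure on the quaternionic side matches the torsion of $\Dcal\bigl(U(2)\oplus U(2)\oplus(-S);\Z\bigr)$ is the single genuinely new ingredient; everything else is a routine repetition of the arguments already used for Theorems \ref{theorem_2} and \ref{theorem_4}.
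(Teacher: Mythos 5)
Your overall architecture (rescale by $F$, verify the forward inclusion via \eqref{gl_6stern}--\eqref{gl_8stern}, run the $\widetilde{M}_0$-conjugation procedure for the converse) does match the paper's, and you correctly sense that $\omega I$ and $\overline{\omega}I$ are where the difficulty sits. But there is a genuine gap precisely there, and your second paragraph contradicts your third. The standard procedure does \emph{not} force $A\equiv D\equiv\varepsilon I\bmod{\wp}$: the relations $A\overline{D}^{tr}\equiv I$ and $\det(M\bmod{\wp})\equiv 1$ only yield
\[
M\equiv\diag(\varepsilon,\overline{\varepsilon},\varepsilon,\overline{\varepsilon})\bmod{\wp},\qquad \varepsilon\in\{1,\omega,\overline{\omega}\},
\]
and for $\varepsilon=\omega,\overline{\omega}$ this is not of the form $\varepsilon I$, since $\omega\not\equiv\overline{\omega}\bmod{\wp}$. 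Correspondingly the forward inclusion fails for the scalars you set aside: for $M=\omega I$ one has $f_M(Z)=\omega Z\overline{\omega}$, so $K$ is the matrix of conjugation by $\omega$ in the basis $1,i,j,\omega$; using $k=-1-i-j+2\omega$ and \eqref{gl_22} one checks $K\notin I+\Z^{4\times 4}S$, so $\omega I$ does \emph{not} land in $\Dcal\bigl(U(2)\oplus U(2)\oplus(-S);\Z\bigr)$ (nor even in $\Dcal(S_1;\Z)$, consistently with $\omega I\notin\Gamma_2(\Ocal)$). Hence the map \eqref{gl_20} composed with the $F$-rescaling does not restrict to the asserted isomorphism, and no amount of evaluating it on generators or ``matching the order-three torsion'' of that same map can repair this.

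The missing ingredient, which is the actual content of the paper's proof, is a further conjugation by $T=\diag\bigl(1+i,1,\tfrac12(1+i),1\bigr)\in Sp_2(\H)$. Because $\wp=(1+i)\Ocal$ is two-sided and $(1+i)\omega(1+i)^{-1}\in\overline{\omega}+\wp$, conjugation by $T$ carries a matrix with diagonal $\equiv\diag(\varepsilon,\overline{\varepsilon},\varepsilon,\overline{\varepsilon})\bmod{\wp}$ to one with scalar diagonal $\bmod{\wp}$; and the stronger congruences that the preimage of $F\,\Dcal\bigl(U(2)\oplus U(2)\oplus(-S);\Z\bigr)F^{-1}$ actually satisfies, namely $a_{21}\equiv b_{22}\equiv d_{12}\equiv c_{12}\equiv c_{22}\equiv 0\bmod{2}$ and $c_{11}\equiv 0\bmod{4}$, are exactly what keeps $TMT^{-1}$ integral and in the principal congruence subgroup $\bmod{\wp}$. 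The isomorphism of the corollary is therefore the composite of this $T$-conjugation with the map of Lemma \ref{lemma_1}, not the map of Lemma \ref{lemma_1} alone; identifying and exploiting this twist is the one genuinely new step your proposal does not supply.
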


\begin{proof}
 Proceed in the same way as in the proof of Theorem \ref{theorem_2}. Matrices $M$ in the preimage of $F\;\Dcal\bigl(U(2) \oplus U(2) \oplus (-S);\Z\bigr)\;F^{-1}$ satisfy
\[                                                                                                                                                                       a_{21}\equiv b_{22} \equiv d_{12} \equiv c_{12} \equiv c_{22} \equiv0\bmod{2}, \;\; c_{11}\equiv 0 \bmod{4}
\]                                                                                                                                                              as well as $\det (M \bmod{\wp})\equiv 1\bmod{\wp}$. Hence the diagonal entries of $M$ are odd quaternions. Thus the diagonal is congruent to 
\[
\diag(\varepsilon,\overline{\varepsilon},\varepsilon, \overline{\varepsilon}) \bmod{\wp}, \; \varepsilon = 1, \omega,\overline{\omega}.
\]
Now conjugate by $\diag (1+i,1,\frac{1}{2}(1+i),1)$. In view of 
\[
(1+i) \omega (1+i)^{-1} \in \overline{\omega} + \wp
\]
the claim follows. 
\end{proof}

\begin{remarkson}
 a) All the elementary divisor forms in Lemma \ref{lemma_2} actually occur:
\[
  \begin{pmatrix}
   X & -I  \\  I & 0
  \end{pmatrix}, \;\; X=\diag(m,mn) \;\;\text{or}\;\; X= m
\begin{pmatrix}
 2 & 1+i  \\ 1-i & n+1
\end{pmatrix}.
\]
 b) Clearly one can also consider the other types of discriminant kernels of the form $\Dcal\bigl(U(N)\oplus U(n)\oplus (-S);\Z\bigr)$ just as in Theorem \ref{theorem_2}.  \\
 c) The case $SO_0(2,10)$ was dealt with in the same way as in Theorem \ref{theorem_4} in \cite{DKW}.
\end{remarkson}
\vspace{2ex}
\noindent
\textbf{Acknowledgement.} The authors thank Brandon Williams for suggesting the topic and the referee for helpful comments.

\vspace{6ex}


\bibliography{bibliography_krieg_2020} 
\bibliographystyle{plain}

\end{document}